\newtheorem{theorem}{Theorem}[section]
\newtheorem{lemma}{Lemma}[section]
\newtheorem{proposition}{Proposition}[section]
\newtheorem{conclusion}{Conclusion}[section]
\newtheorem{remark}{Remark}[section]
\newtheorem{definition}{Definition}[section]
\numberwithin{equation}{section}
\journal{--}
\begin{document}

\begin{frontmatter}

\title{  Metric dimension, minimal doubly resolving sets and the strong metric dimension for jellyfish graph  and cocktail party graph }

\author[label1]{Jia-Bao Liu}
\ead{liujiabaoad@163.com;liujiabao@ahjzu.edu.cn}
\author[label2]{Ali Zafari \corref{1}}
\ead{zafari.math.pu@gmail.com; zafari.math@pnu.ac.ir}
\author[label2]{Hassan Zarei}
\ead{hasan.zarei@pnu.ac.ir}
\address[label1]{School of Mathematics and Physics,
Anhui Jianzhu University, Hefei 230601, P.R. China}
\address[label2]{Department of Mathematics, Faculty of Science,
Payame Noor University, P.O. Box 19395-4697, Tehran, Iran}
\cortext[1]{Corresponding author}
\begin{abstract}
Let $\Gamma$ be a simple connected undirected graph with vertex set $V(\Gamma)$ and edge set $E(\Gamma)$.
The metric dimension of a graph $\Gamma$ is the least number of vertices in a set with the property that the
list of distances from any vertex to those in the set uniquely identifies that vertex. For an ordered subset
$W = \{w_1, w_2, ..., w_k\}$ of vertices in a  graph $\Gamma$ and a vertex $v$ of $\Gamma$, the metric representation
of $v$ with respect to $W$ is the $k$-vector $r(v | W) = (d(v, w_1), d(v, w_2), ..., d(v, w_k ))$. If every pair of distinct
vertices of $\Gamma$ have different metric representations then the ordered set $W$ is called a resolving set of $\Gamma$.
It is known that the problem of computing this invariant is NP-hard. In this paper, we consider the problem of determining
the cardinality $\psi(\Gamma)$ of minimal doubly resolving sets of $\Gamma$, and the strong metric dimension  for jellyfish
graph $JFG(n, m)$ and cocktail party graph $CP(k+1)$.
\end{abstract}
\begin{keyword}
Metric dimension; resolving set; doubly resolving; strong
resolving; jellyfish graph; cocktail party graph.
\MSC[2010] 05C12; 05E30.
\end{keyword}
\end{frontmatter}
\section{Introduction}
\label{sec:introduction}
In this paper we consider finite, simple, and connected graphs. The vertex and edge sets of a graph $\Gamma$ are denoted by
$V(\Gamma)$ and $E(\Gamma)$, respectively. For $u, v \in V (\Gamma)$, the length of a shortest path from $u$ to $v$ is called
the distance between $u$ and $v$ and is denoted by $d_{\Gamma}(u, v)$, or simply  $d(u, v)$. The adjacency and non-adjacency
relations are denoted by $\sim$ and $\nsim$, respectively. The size of the largest clique in the graph $\Gamma$ is denoted by
$\omega(\Gamma)$ and the size of the largest independent sets of vertices by $\alpha(\Gamma)$. A vertex $x\in V(\Gamma)$ is said
to resolve a pair $u, v \in V(\Gamma)$ if $d_{\Gamma}(u, x)\neq d_{\Gamma}(v, x)$. For an ordered subset $W = \{w_1, w_2, ..., w_k\}$
of vertices in a connected graph $\Gamma$ and a vertex $v$ of $\Gamma$, the metric representation of $v$ with respect to $W$ is the
$k$-vector $r(v | W) = (d(v, w_1), d(v, w_2), ..., d(v, w_k ))$. If every pair of distinct vertices of $\Gamma$ have different
metric representations then the ordered set $W$ is called a resolving set of $\Gamma$. Indeed, the set $W$ is called a resolving set
for $\Gamma$ if $r(u | W) = r(v | W)$ implies that $u = v$ for all pairs $u, v$ of vertices of $\Gamma$. If the set $W$ is as small
as possible, then it is called a metric basis of the graph $\Gamma$. We recall that the  metric dimension of $\Gamma$, denoted by
$\beta(\Gamma)$ is defined as the minimum cardinality of a resolving set for $\Gamma$.If $\beta(\Gamma)=k$, then $\Gamma$ is said
to be $k$-dimensional. Chartrand et. al. ~\cite{e-1} determined the bounds of the metric dimensions for any connected graphs and
determined the metric dimensions of some well known families of graphs such as trees, paths, and complete graphs. Bounds on $\beta(\Gamma)$
are presented in terms of the order and the diameter of $\Gamma$. All connected graphs of order $n$ having metric dimension $1, n-1$, or $n-2$
are determined. The concept of resolving set and that of metric dimension date back to the 1950s. They were defined by Bluementhal ~\cite{b-1-a}
in the context of metric space. These notions were introduced to graph theory by Harary and Melter ~\cite{f-1} and Slater ~\cite{o-1} in the 1970s.
For more results related to these concepts see ~\cite{a-1-1,b-1-1,e-1-1,f-1-1,i-1-a,p-1}. These concepts  have different applications in the areas
of network discovery and verification ~\cite{b-1}, robot navigation ~\cite{g-1}, chemistry ~\cite{e-1}, and combinatorical optimization ~\cite{n-1}.
Notice, for each connected graph $\Gamma$ and each ordered set $W = \{w_1, w_2, ..., w_k\}$ of vertices of $\Gamma$, that the $i^{th}$ coordinate of
$r(w_i | W)$ is $0$ and that the $i^{th}$ coordinate of all other vertex representations is positive. Thus, certainly $r(u | W) = r(v | W)$ implies
that $u = v$ for $u\in W$. Therefore, when testing whether an ordered subset $W$ of $V(\Gamma)$ is a resolving set for $\Gamma$, we need only be
concerned with the vertices of $V(\Gamma)-W$.

C\'{a}ceres et al.~\cite{c-1} define the notion of a doubly resolving set as follows. Vertices $x, y$ of the graph $\Gamma$ of order at least 2, are
said to doubly resolve vertices $u, v$ of $\Gamma$ if $d(u, x) - d(u, y) \neq d(v, x) - d(v, y)$. A  set $Z = \{z_1, z_2, ..., z_l\}$ of vertices of $\Gamma$ is a doubly resolving set of $\Gamma$ if every two distinct vertices of $\Gamma$ are doubly resolved by some two vertices of $Z$. The minimal doubly resolving set is a doubly resolving set with minimum cardinality. The cardinality of minimum doubly resolving set is denoted by  $\psi(\Gamma)$.
The minimal doubly resolving sets for Hamming and Prism graphs has been obtained in ~\cite{i-1} and ~\cite{d-1}, respectively. Another researchers
in ~\cite{a-1} determined the minimal doubly resolving sets for necklace graph. Since if $x, y$ doubly resolve $u, v$, then$d(u, x) - d(v, x) \neq 0$
or $d(u, y) - d(v, y) \neq 0$, and hence $x$ or $y$ resolve $u, v$. Therefore, a doubly resolving set is also a resolving set and
$\beta(\Gamma) \leq\psi(\Gamma)$.

The strong metric dimension problem was introduced by A. Seb\"{o} and E. Tannier ~\cite{n-1} and further investigated by O. R. Oellermann and J. Peters-Fransen ~\cite{l-1}. Recently, the strong metric dimension of distance hereditary graphs has been studied by T. May and O. R. Oellermann
~\cite{j-1}. For more results related to this concept see ~\cite{q-1-a}. A vertex $w$ strongly resolves two vertices $u$ and $v$ if $u$ belongs
to a shortest $v - w$ path or $v$ belongs to a shortest $u - w$ path. A set $N= \{n_1, n_2, ..., n_m\}$ of vertices of $\Gamma$ is a strong resolving
set of $\Gamma$ if every two distinct vertices of $\Gamma$ are strongly resolved by some vertex of $N$. A strong metric basis of $\Gamma$ is a strong resolving set of the minimum cardinality. Now, the strong metric dimension of $\Gamma$, denoted by $sdim(\Gamma)$ is defined as the cardinality of its strong metric basis. It is easy to see that if a vertex $w$ strongly resolves vertices $u$ and $v$ then $w$ also resolves these vertices. Hence every
strong resolving set is a resolving set and $\beta(\Gamma) \leq sdim(\Gamma)$.

All three previously defined problems are NP-hard in general case. The proofs of NP-hardness are given for the metric dimension problem in ~\cite{g-1},
for the minimal doubly resolving set problem in ~\cite{h-1} and for the strong metric dimension problem in ~\cite{l-1}. Intrinsic metrics on a graph
have become of interest, as generally discussed in ~\cite{a-1-a,a-1-1,f-1-2,i-1,k-1-1,m-1}, for some classes of graphs. In this paper, we  consider the problem of determining the cardinality $\psi(\Gamma)$  of minimal doubly resolving sets of $\Gamma$, and the strong metric dimension  for jellyfish graph $JFG(n, m)$ and cocktail party graph $CP(k+1)$. In fact, we find the minimum cardinality of resolving set, doubly resolving set  and strong resolving
set of jellyfish graph $JFG(n, m)$. In particular, we show that  the minimum cardinality of resolving set, doubly resolving set  and strong resolving
set of cocktail party graph $CP(k+1)$ is $k+1$.
\section{Definitions And Preliminaries}
\begin{definition} \label{b.1}~\cite{book1}
Let $\Gamma$ be a graph with automorphism group $Aut(\Gamma)$. We say that $\Gamma$ is  vertex transitive  graph if, for any vertices $x, y$ of $\Gamma$  there is some  $\varphi$ in $Aut (\Gamma)$,  such that $\varphi(x) = y$. Also,  we say that $\Gamma$ is symmetric if, for all vertices $u, v, x, y$ of $\Gamma$ such that $u$ and $v$ are adjacent, also, $x$ and $y$ are adjacent, there is an automorphism $\varphi$ such that $\varphi(u)=x$ and $\varphi(v)=y$. Finally, we say that $\Gamma$ is distance transitive if, for all vertices $u, v, x, y$ of $\Gamma$ such that $d(u, v)=d(x, y)$ there is an automorphism $\varphi$ such that $\varphi(u)=x$ and $\varphi(v)=y$.
\end{definition}
\begin{definition}\label{b.2}~\cite{book1}
Let $G$ be a finite group and $\Omega$ a subset of $G$ that is closed under taking inverses and does not contain the identity. A Cayley graph
$\Gamma=Cay(G, \Omega)$ is a graph whose vertex set and edge set are defined as follows:
$$V (\Gamma) =G; \,\, \,\ E(\Gamma) = \{\{x, y\} \, | \,\, x^{-1}y \in \Omega\}.$$
\end{definition}
\begin{remark} \label{b.3}~\cite{book1}
Let $\Gamma$ be a graph. It is clear that we have a hierarchy of  the conditions is

$\textbf{distance transitive}\Rightarrow \textbf {symmetric}\Rightarrow \textbf {vertex  transitive}$
\end{remark}
\begin{proposition}~\cite{k-1}\label{b.4}
Let $\Gamma=Cay(\mathbb{Z}_n, S_k)$ be the Cayley graph on the  cyclic group $\mathbb{Z}_n$ $(n\geq 4),$ where
$S_1=\{1, n-1\}$, ..., $S_k=S_ {k-1}\cup\{k, n-k\}$ are the inverse closed subsets of $\mathbb{Z}_n-\{0\}$ for any $k\in \mathbb{N}$, $1\leq k\leq [\frac{n}{2}]-1$. Then $\chi(\Gamma) = \omega(\Gamma)=k+1$ if and only if $k+1|n,$ where the chromatic number $\chi(\Gamma)$  of $\Gamma$  is the minimum number $k$ such that $\Gamma$ is $k$ colorable.
\end{proposition}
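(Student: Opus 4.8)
The plan is to read $\Gamma=Cay(\mathbb{Z}_n,S_k)$ concretely as the circulant graph in which $i\sim j$ exactly when $i-j\equiv \pm d \pmod n$ for some $1\le d\le k$; equivalently, $\Gamma$ is the $k$-th power of the cycle $C_n$. The only arithmetic consequence of the hypothesis $1\le k\le[\frac{n}{2}]-1$ that I would actually use is $n\ge 2k+2$, which guarantees $k<n-k$ and hence that the forward neighbours $\{1,\dots,k\}$ and the backward neighbours $\{n-k,\dots,n-1\}$ are disjoint. With this in hand, the first step is a short lemma: any $k+1$ consecutive vertices $\{i,i+1,\dots,i+k\}$ form a clique, since their pairwise differences lie in $\{1,\dots,k\}\subseteq S_k$ and $n\ge 2k+2$ forbids any such difference from also landing in $\{n-k,\dots,n-1\}$. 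This immediately gives $\omega(\Gamma)\ge k+1$, which, with the standard inequality $\omega(\Gamma)\le\chi(\Gamma)$, will anchor both directions of the equivalence.

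For the direction $(k+1)\mid n \Rightarrow \chi(\Gamma)=\omega(\Gamma)=k+1$, I would exhibit the explicit colouring $c(i)=i\bmod(k+1)$ using the $k+1$ colours $\{0,1,\dots,k\}$. To check it is proper, note that if $i\sim j$ then $i-j\equiv\pm d\pmod n$ with $1\le d\le k$; since $(k+1)\mid n$, this congruence descends to $i-j\equiv\pm d\pmod{k+1}$, and $\pm d\not\equiv 0\pmod{k+1}$ because $1\le d\le k$, so $c(i)\ne c(j)$. Hence $\chi(\Gamma)\le k+1$. Combined with the clique bound $k+1\le\omega(\Gamma)\le\chi(\Gamma)$, this yields the sandwich $k+1\le\omega(\Gamma)\le\chi(\Gamma)\le k+1$, forcing both equalities simultaneously; notably, no separate upper bound on $\omega(\Gamma)$ is required.

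For the converse I would show that $\chi(\Gamma)=k+1$ alone forces $(k+1)\mid n$. Fix a proper colouring $c$ with a palette of $k+1$ colours. Each window $W_i=\{i,i+1,\dots,i+k\}$ is a clique of size $k+1$, so $c$ restricts to a bijection onto the full palette on every $W_i$. Comparing the overlapping windows $W_i$ and $W_{i+1}$, which share the $k$ vertices $\{i+1,\dots,i+k\}$, the unique colour missing from the shared block equals $c(i)$ in $W_i$ and $c(i+k+1)$ in $W_{i+1}$, whence $c(i)=c(i+k+1)$ for every $i$. Thus $c$ is invariant under translation by $k+1$, hence constant on the subgroup $\langle k+1\rangle=\langle g\rangle$ of $\mathbb{Z}_n$, where $g=\gcd(k+1,n)$. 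If $(k+1)\nmid n$, then $g$ is a proper divisor of $k+1$, so $1\le g\le\frac{k+1}{2}\le k$; consequently $0$ and $g$ both lie in the clique $W_0=\{0,\dots,k\}$ while $c(0)=c(g)$, contradicting the injectivity of $c$ on $W_0$. Therefore $(k+1)\mid n$.

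Assembling the two directions gives the stated equivalence: if $(k+1)\mid n$ the conjunction $\chi(\Gamma)=\omega(\Gamma)=k+1$ holds, and if $(k+1)\nmid n$ then no proper $(k+1)$-colouring exists, so $\chi(\Gamma)\ge k+2>\omega(\Gamma)$ and the conjunction fails. I expect the main obstacle to be precisely this \emph{rigidity} step in the necessity direction: converting the availability of only $k+1$ colours into the periodicity $c(i)=c(i+k+1)$ and then into a divisibility statement through the $\gcd$ computation, while being careful that the hypothesis $k\le[\frac{n}{2}]-1$ is invoked exactly where it is needed, namely to secure $n\ge 2k+2$ so that every consecutive window is genuinely a clique and so that $g\le k$ places $0$ and $g$ inside one such clique.
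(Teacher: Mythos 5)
Your proposal is correct, but there is nothing in this paper to compare it against: the paper states Proposition~\ref{b.4} as a quoted result from the reference of Mirafzal and Zafari (cited as~\cite{k-1}) and supplies no proof of its own, so your argument stands as a self-contained replacement rather than a variant of an internal proof. Both halves of your argument check out. For sufficiency, the colouring $c(i)=i\bmod (k+1)$ is proper precisely because $(k+1)\mid n$ lets the congruence $i-j\equiv\pm d\pmod n$ descend to the modulus $k+1$, and the sandwich $k+1\le\omega(\Gamma)\le\chi(\Gamma)\le k+1$ then settles both equalities at once. For necessity, your rigidity step is the real content and is sound: every window $W_i=\{i,\dots,i+k\}$ is a rainbow clique on the full palette, the overlap of $W_i$ and $W_{i+1}$ forces $c(i)=c(i+k+1)$, and invariance under translation by $k+1$ makes $c$ constant on $\langle\gcd(k+1,n)\rangle$; when $(k+1)\nmid n$ the element $g=\gcd(k+1,n)$ is a proper divisor of $k+1$, so $1\le g\le\frac{k+1}{2}\le k$ places $0$ and $g$ in the same clique $W_0$ with equal colours, a contradiction. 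Note that your converse actually proves the stronger statement that $\chi(\Gamma)=k+1$ alone forces $(k+1)\mid n$, which subsumes what the proposition requires. One small blemish: in your closing paragraph the assertion $\chi(\Gamma)\ge k+2>\omega(\Gamma)$ silently assumes $\omega(\Gamma)\le k+1$, which you never prove (and which requires a separate argument about cliques wrapping around the cycle); it is also unnecessary, since the failure of the conjunction $\chi(\Gamma)=\omega(\Gamma)=k+1$ already follows from $\chi(\Gamma)\ne k+1$. You should either drop that comparison with $\omega(\Gamma)$ or prove the upper bound $\omega(\Gamma)\le k+1$ explicitly. Finally, your parenthetical that $n\ge 2k+2$ is needed for the windows to be cliques is slightly off: the clique property only needs $\{1,\dots,k\}\subseteq S_k$ and $k+1\le n$; where $n\ge 2k+2$ genuinely matters is in guaranteeing $S_k\ne\mathbb{Z}_n\setminus\{0\}$, i.e., that the hypothesis $k\le[\frac{n}{2}]-1$ is consistent, and it is harmless to your argument either way.
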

\begin{definition}~\cite{f-1-2} \label{b.7}
Let $\Gamma$ be a graph, and let $W = \{w_1, ... ,w_k\} \subseteq V(\Gamma)$. For each vertex $v \in V(\Gamma)$, the adjacency representation
of $v$ with respect to $W$ is the $k$-vector $$\hat{r}(v|W) = (a_\Gamma(v,w_1), ...,  a_\Gamma(v,w_k)),$$ where
\begin{equation*}
a_\Gamma(v, w_i)= \left\{
\begin{array}{lr}
0 \,\,\,\,\,\,\,\,\,\,\,\,\,\,\,\ if \,\ v=w_i, \\

1 \,\,\,\,\,\,\,\,\,\,\,\,\,\,\,\ if \,\ v\sim w_i, \\

2 \,\,\,\,\,\,\,\,\,\,\,\,\,\,\,\ if \,\ v\nsim w_i. \\
\end{array} \right.
\end{equation*}
The set $W$ is an adjacency resolving set for $\Gamma$ if the vectors $\hat{r}(v|W)$ for $v \in V (\Gamma)$ are distinct. The
minimum cardinality of an adjacency resolving set is the adjacency dimension of $\Gamma$, denoted by $\hat{\beta}(\Gamma)$.
An adjacency resolving set of cardinality $\hat{\beta}(\Gamma)$ is an adjacency basis of $\Gamma$.
\end{definition}
\begin{lemma}~\cite{f-1-2}\label{b.8}
Let $\Gamma$ be a graph of order $n$.\newline

1) If $diam(\Gamma) = 2$, then $\hat{\beta}(\Gamma)=\beta(\Gamma)$.\newline

2) If $\Gamma$ is connected, then $\beta(\Gamma)\leq \hat{\beta}(\Gamma)$.\newline

3) $1\leq \hat{\beta}(\Gamma) \leq n-1$.\newline
\end{lemma}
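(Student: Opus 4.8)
The plan is to reduce all three parts to a single structural identity relating the two representations: for a connected graph $\Gamma$ and any vertices $v,w$, the adjacency coordinate satisfies $a_\Gamma(v,w)=\min\{d(v,w),2\}$. This is immediate from Definition~\ref{b.7}, since $a_\Gamma(v,w)=0$ exactly when $d(v,w)=0$, equals $1$ exactly when $d(v,w)=1$, and equals $2$ precisely when $d(v,w)\geq 2$. I would record this identity first, because it makes each assertion fall out quickly.

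For part (1), when $diam(\Gamma)=2$ every distance lies in $\{0,1,2\}$, so $\min\{d(v,w),2\}=d(v,w)$ and hence $\hat r(v|W)=r(v|W)$ for every vertex $v$ and every $W\subseteq V(\Gamma)$. Consequently a set $W$ resolves the vertices in the adjacency sense if and only if it resolves them in the metric sense, and minimising the cardinality over such sets gives $\hat\beta(\Gamma)=\beta(\Gamma)$.

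For part (2), the identity shows that $\hat r(\,\cdot\,|W)$ is a coordinatewise function of $r(\,\cdot\,|W)$. Hence if an adjacency resolving set $W$ separates two distinct vertices $u,v$, there is some $w_i\in W$ with $a_\Gamma(u,w_i)\neq a_\Gamma(v,w_i)$, i.e. $\min\{d(u,w_i),2\}\neq\min\{d(v,w_i),2\}$; since equal distances would force equal minima, this yields $d(u,w_i)\neq d(v,w_i)$, so $W$ is a metric resolving set. Thus every adjacency resolving set is a resolving set, and minimising gives $\beta(\Gamma)\leq\hat\beta(\Gamma)$. Connectedness is invoked only to guarantee that $d(u,w_i)$ is finite and the metric representation well defined.

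For part (3), the lower bound is immediate: the empty set assigns all vertices the same (empty) representation, so for $n\geq 2$ at least one vertex is required. For the upper bound I would exhibit the explicit set $W=V(\Gamma)\setminus\{x\}$ for an arbitrary vertex $x$: each $w_i\in W$ is the unique vertex whose $i$-th coordinate is $0$ (since $a_\Gamma(w_i,w_i)=0$ while all other coordinates are positive), and $x$ is the only vertex with no zero coordinate, so the $n$ representations are pairwise distinct; hence $\hat\beta(\Gamma)\leq|W|=n-1$. There is no genuine obstacle in the argument. The only point requiring care is the \emph{direction} of the implication in part (2)---adjacency separation implies metric separation but not conversely---which is precisely what the inequality $\beta(\Gamma)\leq\hat\beta(\Gamma)$, rather than equality, reflects in the general (diameter $>2$) case.
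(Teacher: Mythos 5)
Your proof is correct, but there is nothing in the paper to compare it against: Lemma~\ref{b.8} is quoted from Jannesari and Omoomi~\cite{f-1-2} and the paper supplies no proof of its own. Your reduction of all three parts to the single identity $a_\Gamma(v,w)=\min\{d_\Gamma(v,w),2\}$ is the standard mechanism behind the result and each step checks out: when $diam(\Gamma)=2$ the identity makes $\hat{r}(\cdot\,|W)$ and $r(\cdot\,|W)$ literally coincide, so the two families of resolving sets are equal and part~(1) follows; in general the identity shows adjacency separation implies metric separation (equal distances force equal truncations), giving part~(2), with connectedness correctly invoked only so that $\beta(\Gamma)$ is defined; and the witness $W=V(\Gamma)\setminus\{x\}$ for the upper bound in part~(3), distinguished by the unique zero coordinate of each $w_i$ and the absence of any zero coordinate for $x$, is exactly right. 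One small point worth flagging, which you implicitly handle: as stated, part~(3) requires $n\geq 2$ (for $n=1$ the empty set adjacency-resolves the graph and the claimed bounds $1\leq\hat{\beta}(\Gamma)\leq n-1$ cannot both hold), so your remark restricting the lower-bound argument to $n\geq 2$ is not pedantry but a genuine implicit hypothesis of the cited lemma.
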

\section{Main results}
\noindent \textbf{Metric dimension, minimal doubly resolving sets and the strong metric dimension  of  jellyfish graph $JFG(n, m)$ }\\

For an integer $n\geq 3$, the $n$-cycle is the graph $C_n$ with vertex set $V(C_n)=\{v_1, v_2,...,v_n\}$  or simply $\{1, 2, ..., n\}$ and edge set $E(C_n)=\{v_iv_{i+1}:1\leq i\leq n\}$, where $v_{n+1}=v_1$. An interesting family of  graphs  of order $nm+n$ is defined  as follows. Let $\Gamma$
be a graph with vertex set $V_1 \cup V_2$, where $V_1=V(C_n)$,   $V_2=\{A_{1j}, A_{2j}, ..., A_{nj}\}$, and let   $A_{ij}=\cup_{j=1} ^ m v_{ij}$,
$1\leq i \leq n$. Suppose that every vertex $i\in V_1$  is adjacent to vertices $v_{i1}, v_{i2}, ..., v_{im}\in A_{ij}$, and $deg(v_{ij})=1$ for
every  vertex $v_{ij}\in A_{ij}$, then the resulting graph is called the jellyfish graph $JFG(n, m)$ with parameters $m$ and $n$. In particular,
if $n$ is an even integer then  the  jellyfish graph $JFG(n, m)$  is a bipartite graph. In this paper, we consider the problem of determining the cardinality $\psi(JFG(n, m))$  of minimal doubly resolving sets of  the  jellyfish graph $JFG(n, m)$. First, we find the metric dimension of
jellyfish graph $JFG(n, m)$, in fact we prove that  if $n\geq3$  and $m\geq2$ then the metric dimension of  jellyfish graph $JFG(n, m)$ is  $nm-n$.
Also, we consider the problem of determining the cardinality $\psi(JFG(n, m))$  of minimal doubly resolving sets of $JFG(n, m)$, and the strong metric
dimension  of the jellyfish graph $JFG(n, m)$. Moreover, we find an  adjacency dimension  of the jellyfish graph $JFG(n, m)$.
\begin{theorem}\label{f.1}
Let $n, m$ be integers such that $n\geq 3$, $m\geq 2$. Then the metric dimension of  jellyfish graph $JFG(n, m)$ is  $nm-n$.
\end{theorem}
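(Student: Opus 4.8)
The plan is to establish matching lower and upper bounds of $nm-n$. The structural observation driving everything is that, for a fixed cycle vertex $i$, the $m$ pendant vertices $v_{i1},\dots,v_{im}$ all share the single neighbour $i$ and are pairwise at distance $2$; they are therefore \emph{false twins}. Concretely, for any two pendants $v_{ij},v_{ik}$ in the same bunch $A_{ij}$ and any vertex $w\notin\{v_{ij},v_{ik}\}$ one has $d(v_{ij},w)=d(v_{ik},w)$, since every shortest path from such a pendant to $w$ must pass through $i$.

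For the lower bound I would invoke this twin property directly. If a set $W$ omitted two pendants $v_{ij},v_{ik}$ of the same bunch, then by the displayed equality $r(v_{ij}\mid W)=r(v_{ik}\mid W)$, so $W$ is not resolving. Hence $W$ must contain at least $m-1$ vertices from each of the $n$ pairwise disjoint bunches $A_{1j},\dots,A_{nj}$; since pendants attached to distinct cycle vertices are distinguished by those cycle vertices (so the bunches are genuinely distinct twin classes), this forces $|W|\ge n(m-1)=nm-n$.

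For the upper bound the natural candidate is $W=\{v_{ij}:1\le i\le n,\ 1\le j\le m-1\}$, of cardinality $nm-n$, obtained by deleting one pendant $v_{im}$ from each bunch. I would first record the distances in $JFG(n,m)$ in terms of the cycle distance $D(i,i')$ in $C_n$: namely $d(i,v_{i'\ell})=D(i,i')+1$ and $d(v_{im},v_{i'\ell})=D(i,i')+2$ when $i\ne i'$, with the values $1$ and $2$ respectively when $i=i'$. The crucial consequence is that for every cycle vertex $i$ the leftover pendant satisfies $r(v_{im}\mid W)=r(i\mid W)+\mathbf 1$, where $\mathbf 1$ is the all-ones vector. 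The only vertices outside $W$ are the $n$ cycle vertices and the $n$ leftover pendants $v_{1m},\dots,v_{nm}$, so it remains to separate these $2n$ vertices.

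Two distinct cycle vertices $i,i''$ are separated by the coordinate of any pendant $v_{i\ell}\in W$ belonging to $i$: there $r(i\mid W)$ equals $1$ while $r(i''\mid W)$ equals $D(i'',i)+1\ge 2$ (this is exactly where $m\ge 2$ is used, to guarantee such a pendant lies in $W$). Since the map $x\mapsto x+\mathbf 1$ is injective, distinct leftover pendants $v_{im},v_{i''m}$ are then separated as well. Finally a cycle vertex $i$ and a leftover pendant $v_{i''m}$ are separated: if $i=i''$ they differ in every coordinate by the shift relation, and if $i\ne i''$ the $v_{i\ell}$-coordinate reads $1$ for $i$ but $D(i'',i)+2\ge 3$ for $v_{i''m}$. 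This exhausts all pairs, so $W$ resolves $JFG(n,m)$ and $\beta(JFG(n,m))=nm-n$. I expect the main obstacle to be bookkeeping the upper-bound verification cleanly — in particular isolating the shift identity $r(v_{im}\mid W)=r(i\mid W)+\mathbf 1$, which collapses the three families of pairs into essentially one distance computation.
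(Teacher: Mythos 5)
Your proof is correct, and while your upper bound coincides with the paper's (its Case~5 uses exactly your witness set $W=\{A_{1j}-v_{1m},\dots,A_{nj}-v_{nm}\}$, i.e.\ all pendants minus one per bunch), your lower bound is genuinely different and in fact stronger than what the paper offers. The paper's proof of minimality consists of checking that a few \emph{specific} sets fail to resolve --- subsets of the cycle (Case~1), all bunches but one (Case~2), all pendants minus two from a single bunch (Case~3) --- and then ``concludes'' the lower bound; it never rules out an arbitrary set of size less than $nm-n$, in particular one mixing cycle vertices with pendants spread unevenly across bunches. Your false-twin argument closes precisely this gap: since any two pendants $v_{ij},v_{ik}$ of the same bunch satisfy $d(v_{ij},w)=d(v_{ik},w)$ for every $w\notin\{v_{ij},v_{ik}\}$, \emph{every} resolving set must contain at least $m-1$ vertices from each of the $n$ disjoint bunches, giving $|W|\ge n(m-1)=nm-n$ with no case analysis over candidate sets. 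On the upper-bound side, your shift identity $r(v_{im}\mid W)=r(i\mid W)+\mathbf{1}$ is also a cleaner organization than the paper's, which verifies Case~5 only by reference to ``a similar fashion'' as its Case~4; your identity reduces the three families of pairs (cycle--cycle, pendant--pendant, cycle--pendant) to the single computation that cycle vertices are separated by their own pendants in $W$ (where $m\ge 2$ enters), plus injectivity of the all-ones shift and the coordinate comparison $1$ versus $D(i'',i)+2\ge 3$ for the mixed pairs. In short: same extremal construction, but your lower bound is a complete proof where the paper's is only illustrative, and your bookkeeping of the verification is tighter.
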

\begin{proof}
Let $V(JFG(n, m))=V_1 \cup V_2$, where $V_1=V(C_n)=\{1, 2, ..., n\}$,   $V_2=\{A_{1j}, A_{2j}, ..., A_{nj}\}$, and let
$A_{ij}=\cup_{j=1} ^ m v_{ij}$, $1\leq i \leq n$. Suppose that every vertex $i\in V_1$  is adjacent to vertices
$v_{i1}, v_{i2}, ..., v_{im}\in A_{ij}$. We can show that the diameter of  jellyfish graph $JFG(n, m)$ is $[\frac{n}{2}]+2$.
 In the following cases, we show that the metric dimension of  jellyfish graph $JFG(n, m)$ is $nm-n$.\newline

Case 1. Let $W$  be  an ordered subset  of $V_1$ in the  jellyfish graph $JFG(n, m)$ such that $|W|\leq n$.
It is easy to prove that  if  $|W|< n$, then $W$ is  not a resolving set of jellyfish graph $JFG(n, m)$. In particular, if $|W|= n$ then
we show that $W$ is not a resolving set of   jellyfish graph $JFG(n, m)$. We may assume without loss of generality that an ordered subset
is $W=\{1,2, ..., n\}$. Hence, $V(JFG(n, m))- W=\{A_{1j}, A_{2j}, ..., A_{nj}\}$. Therefore, the metric representation of  the  vertices
$v_{11}, v_{12}, ..., v_{1m}\in A_{1j}$ with respect to $W$ is the same as $n$-vector. Thus, $W$ is not a resolving set of   jellyfish graph
$JFG(n, m)$. \newline

Case 2.  Let $W$  be an ordered subset of $V_2$ in the   jellyfish graph $JFG(n, m)$ such that $W=\{A_{2j}, A_{3j}, ..., A_{nj}\}$. Hence,
$V(JFG(n, m))- W=\{1, 2, ..., n, A_{1j}\}$. We know that $|W|= nm-m$. Therefore, the metric representation of  the  vertices
$v_{11}, v_{12}, ..., v_{1m}\in A_{1j}$ with respect to $W$ is the same as $nm-m$-vector. Thus, $W$ is not a resolving set of  jellyfish graph
$JFG(n, m)$. \newline

Case 3. Let $W$  be an ordered subset of $V_2$ in the  jellyfish graph $JFG(n, m)$ such that $W=\{A_{1j}, A_{2j}, A_{3j}, ..., A_{nj}-\{v_{n1} , v_{n2}\}\}$. Hence, $V(JFG(n, m))- W=\{1, 2, ..., n, v_{n1},  v_{n2} \}$. We know that $|W|= nm-2$. Therefore, the metric representation of  the
vertices  $v_{n1}, v_{n2}\in A_{nj}$ with respect to $W$ is the same as $nm-2$-vector. Thus, $W$ is not a resolving set of   jellyfish graph
$JFG(n, m)$. \newline

Case 4. Let $W$  be an ordered subset of $V_2$ in  the jellyfish graph $JFG(n, m)$ such that $|W|= nm-1$. We show that $W$ is  a resolving set of   jellyfish graph $JFG(n, m)$. We may assume without loss of generality that an ordered subset is $W=\{A_{1j}, A_{2j}, ..., A_{nj}-v_{nm}\}$. Hence,
$V(JFG(n, m))- W=\{1, 2, ..., n, v_{nm}\}$. We can show that all the vertices $1, 2, ..., n, v_{nm}\in V(JFG(n, m))-W$ have different
representations with respect to $W$. Because, for every $k\in V(JFG(n, m))- W$, $1\leq k\leq n$ and $v_{ij}\in A_{ij}$, $1\leq i \leq n$,
$1\leq j \leq m$, if $k=i$ then we have $d(k, v_{ij})=1$, otherwise $d(k, v_{ij})>1$. Also, for the vertex $v_{nm} \in V(JFG(n, m))- W$ with
$v_{nm}\neq v_{ij}\in A_{ij}$, $1\leq i \leq n$, $1\leq j \leq m$, if $i=n$ then we have $d(v_{nm}, v_{ij})=2$, otherwise
$d(v_{nm}, v_{ij})>2$. Therefore, all the vertices $1, 2, ..., n, v_{nm}\in V(JFG(n, m))-W$ have different representations with respect to $W$.
This implies that $W$ is a resolving set of  jellyfish graph $JFG(n, m)$.
\newline

Case 5. Let $W$  be an ordered subset of $V_2$ in  the  jellyfish graph $JFG(n, m)$ such that $W=\{A_{1j}-v_{1m}, A_{2j}-v_{2m}, ..., A_{nj}-v_{nm}\}$.
Hence, $V(JFG(n, m))- W=\{1, 2, ..., n, v_{1m}, v_{2m}, ..., v_{nm}\}$. We know that $|W|= nm-n$. In a similar fashion which is done in Case 4, we can
show that all the vertices $1, 2, ..., n, v_{1m}, v_{2m}, ..., v_{nm}\in V(JFG(n, m))-W$ have different representations with respect to $W$. This implies that $W$ is a resolving set of   jellyfish graph $JFG(n, m)$.
\newline

Case 6. In particular,  let $W$  be an ordered subset of $V_2$ in  the  jellyfish graph $JFG(n, m)$ such that $|W|= nm$. We show that $W$ is
a resolving set of   jellyfish graph $JFG(n, m)$. We may assume without loss of generality that an ordered subset is $W=\{A_{1j}, A_{2j}, ..., A_{nj}\}$.   Hence $V(JFG(n, m))- W=\{1, 2, ..., n\}$. We can show that all the vertices $1, 2, ..., n\in V(JFG(n, m))-W$ have different representations with
respect to $W$. This implies that $W$ is a resolving set of   jellyfish graph $JFG(n, m)$.
\newline

From  the above cases, we  conclude that the minimum cardinality of a resolving set of the  jellyfish graph $JFG(n, m)$ is $nm-n$.
\end{proof}
\begin{lemma}\label{f.4-1}
Let $n, m$ be integers such that $n\geq 3$, $m\geq 2$. Then the subset $Z=\{A_{1j}-v_{1m}, A_{2j}-v_{2m}, ..., A_{nj}-v_{nm}\}$ of vertices in  the  jellyfish graph $JFG(n, m)$ is not a doubly resolving set of  jellyfish graph $JFG(n, m)$.
\end{lemma}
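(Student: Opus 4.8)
The plan is to disprove the doubly-resolving property by exhibiting a single pair of distinct vertices that \emph{no} two members of $Z$ can doubly resolve. Recall that a pair $x, y$ doubly resolves $u, v$ precisely when $d(u, x) - d(v, x) \neq d(u, y) - d(v, y)$; consequently, $Z$ fails to doubly resolve a pair $u, v$ exactly when the quantity $d(u, x) - d(v, x)$ is \emph{independent of} $x$ as $x$ ranges over $Z$. Thus the whole argument reduces to producing one such ``constant-difference'' pair, after which the conclusion is immediate.

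First I would record which vertices $Z$ actually contains: since $A_{ij} = \{v_{i1}, \dots, v_{im}\}$, the set $Z = \{A_{1j}-v_{1m}, \dots, A_{nj}-v_{nm}\}$ consists of every pendant \emph{except} the last one $v_{im}$ at each cycle vertex $i$, so in particular $v_{1m} \notin Z$ and $1 \notin Z$. The pair I would choose is $u = v_{1m}$ together with $v = 1$, its unique neighbor on the cycle. The key structural fact is that $v_{1m}$ is a leaf (of degree $1$) whose only neighbor is $1$; hence for every vertex $x \neq v_{1m}$ each shortest path from $x$ to $v_{1m}$ must traverse the edge $\{1, v_{1m}\}$ last, giving $d(v_{1m}, x) = d(1, x) + 1$. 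Because $v_{1m} \notin Z$, this identity holds for \emph{every} $x \in Z$, and therefore
\begin{equation*}
d(v_{1m}, x) - d(1, x) = 1 \qquad \text{for all } x \in Z.
\end{equation*}

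This constant difference means that no two vertices of $Z$ can doubly resolve the pair $v_{1m}, 1$, so $Z$ is not a doubly resolving set of $JFG(n, m)$. I do not expect a genuine obstacle here: the computation is a one-line consequence of $v_{1m}$ being a pendant vertex. The only point deserving care is the insistence that the chosen leaf lie \emph{outside} $Z$ --- this is exactly why I pick one of the excluded pendants $v_{1m}$ rather than some $v_{1j}$ with $j \leq m-1$. Indeed, for a leaf belonging to $Z$ the term corresponding to $x$ equal to that leaf itself contributes difference $-1$ while all other terms contribute $+1$, destroying the constancy, so such a pair \emph{could} be doubly resolved and would not serve as a counterexample.
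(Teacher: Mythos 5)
Your proof is correct and follows the same route as the paper: the paper's own argument takes $u=v_{im}$ and $v=i$ and asserts the constant difference $d(u,x)-d(u,y)=d(v,x)-d(v,y)$ for all $x,y\in Z$, which is exactly your pendant-vertex identity $d(v_{1m},x)=d(1,x)+1$ specialized to $i=1$. You additionally spell out the justification (the leaf's unique neighbor forces every shortest path through the edge $\{1,v_{1m}\}$, and $v_{1m}\notin Z$) that the paper leaves implicit, which is a welcome refinement rather than a different method.
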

\begin{proof}
We know that the ordered subset $Z=\{A_{1j}-v_{1m}, A_{2j}-v_{2m}, ..., A_{nj}-v_{nm}\}$ of vertices in  the  jellyfish graph $JFG(n, m)$ is a resolving
set of jellyfish graph $JFG(n, m)$ of size $nm-n$. Also by Theorem \ref{f.1},  the metric dimension of jellyfish graph $JFG(n, m)$ is
$\beta(JFG(n, m))=nm-n$. Moreover,  $B(JFG(n, m))\leq \psi(JFG(n, m))$. We show  that the subset $Z=\{A_{1j}-v_{1m}, A_{2j}-v_{2m}, ..., A_{nj}-v_{nm}\}$
of vertices in jellyfish graph $JFG(n, m)$ is not a doubly resolving set of jellyfish graph $JFG(n, m)$. Because, if  $u=v_{im}$ and
$v=i$, $1\leq i\leq n$, then for every $x,y\in Z$, we have $d(u, x) - d(u, y) = d(v, x) - d(v, y)$.
\end{proof}
\begin{lemma}\label{f.4-2}
Let $n, m$ be integers such that $n\geq 3$, $m\geq 2$. Then the subset $Z=\{A_{1j}, A_{2j}, ..., A_{nj}-v_{nm}\}$ of vertices in  the  jellyfish graph $JFG(n, m)$ is not a doubly resolving set of jellyfish graph $JFG(n, m)$.
\end{lemma}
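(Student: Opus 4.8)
The plan is to disprove the double-resolving property by exhibiting a single pair of distinct vertices that $Z$ fails to doubly resolve. Recall that $Z$ doubly resolves $u,v$ precisely when there exist $x,y\in Z$ with $d(u,x)-d(u,y)\neq d(v,x)-d(v,y)$; equivalently, $Z$ \emph{fails} to doubly resolve $u,v$ exactly when $d(u,x)-d(v,x)$ takes one and the same value for every $x\in Z$, i.e. when the representations $r(u\mid Z)$ and $r(v\mid Z)$ differ by a constant vector. So it suffices to produce a pair $u\neq v$ for which $d(u,x)-d(v,x)$ is independent of $x\in Z$.

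Since $Z=\{A_{1j}, A_{2j}, \ldots, A_{nj}-v_{nm}\}$ consists of every leaf of $JFG(n,m)$ except $v_{nm}$, I would take $u=v_{nm}$ and $v=n$, the cycle vertex to which $v_{nm}$ is attached; both lie outside $Z$, so they are admissible. Each $x\in Z$ is a leaf $v_{i'j'}$ hanging off some cycle vertex $i'$, and because $v_{i'j'}$ and $v_{nm}$ each have degree $1$, every shortest path through them must traverse their pendant edges. This yields the uniform distance formulas $d(n,v_{i'j'})=d(n,i')+1$ and $d(v_{nm},v_{i'j'})=d(n,i')+2$, valid for all $i'$ (including $i'=n$, where $d(n,i')=0$ gives the values $1$ and $2$, i.e. the leaf-to-parent and leaf-to-sibling distances). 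Subtracting, $d(v_{nm},x)-d(n,x)=\bigl(d(n,i')+2\bigr)-\bigl(d(n,i')+1\bigr)=1$ for every $x\in Z$. Hence $d(u,x)-d(u,y)=d(v,x)-d(v,y)$ for all $x,y\in Z$, so no two elements of $Z$ doubly resolve $v_{nm}$ and $n$, and $Z$ is not a doubly resolving set.

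This argument runs in parallel to Lemma \ref{f.4-1}: there one leaf is withheld from $Z$ at every cycle vertex, whereas here exactly the single leaf $v_{nm}$ is withheld, and that lone leaf-parent pair $(v_{nm},n)$ already destroys the double-resolving property. There is no real obstacle; the only point demanding care is the bookkeeping of the two pendant-edge contributions in the distance formulas, which must be checked to hold uniformly so that the difference collapses to the same constant $1$ for every $x\in Z$ rather than merely for a subfamily. The hypothesis $m\geq 2$ plays no role in the conclusion beyond guaranteeing that $Z$ is nonempty at vertex $n$, and the choice of $u,v$ avoids $Z$ by construction, so the reformulation in the first paragraph applies directly.
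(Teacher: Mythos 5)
Your proposal is correct and takes essentially the same route as the paper: the paper's proof also exhibits the pair $u=v_{nm}$, $v=n$ and asserts that $d(u,x)-d(u,y)=d(v,x)-d(v,y)$ for all $x,y\in Z$. You merely make explicit the distance computations $d(n,v_{i'j'})=d(n,i')+1$ and $d(v_{nm},v_{i'j'})=d(n,i')+2$ that the paper leaves unstated, which is a welcome addition but not a different argument.
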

\begin{proof}
We show  that  subset $Z=\{A_{1j}, A_{2j}, ..., A_{nj}-v_{nm}\}$ of vertices in  the  jellyfish graph $JFG(n, m)$ is not a doubly resolving set of    jellyfish graph $JFG(n, m)$. Because, if  $u=v_{nm}$ and $v=n$, then for every $x,y\in Z$, we have $d(u, x) - d(u, y) = d(v, x) - d(v, y)$.
\end{proof}
\begin{theorem}\label{f.4}
 Let $n, m$ be integers such that $n\geq 3$, $m\geq 2$. Then the cardinality of minimum doubly resolving set of jellyfish graph $JFG(n, m)$ is  $nm$.
\end{theorem}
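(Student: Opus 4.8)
The plan is to prove $\psi(JFG(n,m)) = nm$ by establishing the two matching bounds $\psi(JFG(n,m)) \geq nm$ and $\psi(JFG(n,m)) \leq nm$. The lower bound will come from a single distance identity about a leaf and its support, an identity that simultaneously explains the failures exhibited in Lemma \ref{f.4-1} and Lemma \ref{f.4-2}; the upper bound will come from verifying that the set of all $nm$ leaves is itself a doubly resolving set. First I would record the distances in $JFG(n,m)$. Writing $d_C(i,i')$ for the distance in the cycle $C_n$, one has $d(i,i') = d_C(i,i')$, $d(i, v_{i'j}) = d_C(i,i') + 1$ (so that $d(i,v_{ij}) = 1$), and for two leaves $d(v_{ij}, v_{i'j'}) = d_C(i,i') + 2$ when $i \neq i'$ while $d(v_{ij},v_{ij'}) = 2$ when $j \neq j'$. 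The crucial consequence is the identity
\[
d(v_{ij}, w) - d(i, w) = 1 \qquad \text{for every vertex } w \neq v_{ij},
\]
whereas $d(v_{ij}, v_{ij}) - d(i, v_{ij}) = -1$. Thus the difference function $w \mapsto d(v_{ij}, w) - d(i, w)$ is constant, equal to $1$, on the entire graph except at the single vertex $v_{ij}$.

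For the lower bound, suppose $Z$ is any doubly resolving set and fix a leaf $v_{ij}$. To doubly resolve the pair $u = v_{ij}$, $v = i$ we need two vertices $x, y \in Z$ with $d(u,x) - d(v,x) \neq d(u,y) - d(v,y)$, that is, with the difference function above taking two distinct values on $Z$. By the identity this is possible only when $v_{ij} \in Z$, since the deviating value $-1$ is attained nowhere else. As this is required for each of the $nm$ leaves, every doubly resolving set must contain all leaves, and therefore $\psi(JFG(n,m)) \geq nm$. Note that Lemma \ref{f.4-1} and Lemma \ref{f.4-2} are precisely the special cases in which a leaf $v_{im}$ has been omitted.

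For the upper bound I would show that $Z = \{A_{1j}, A_{2j}, \ldots, A_{nj}\}$, the set of all $nm$ leaves, is doubly resolving, which gives $\psi(JFG(n,m)) \leq nm$ and closes the argument. Here I would run through the possible types of an unordered pair $\{u,v\}$ and, in each, exhibit two leaves on which $w \mapsto d(u,w) - d(v,w)$ takes different values: when $u = i$ and $v = v_{ij}$ share a support, use $v_{ij}$ together with any other leaf; when $u = v_{ij}$ and $v = v_{ij'}$ lie in the same group, use $v_{ij}$ and $v_{ij'}$; when $u = v_{ij}$ and $v = v_{i'j'}$ lie in different groups, use $v_{ij}$ and $v_{i'j'}$; when $u = i$ and $v = i'$ are two cycle vertices, use a leaf of $i$ and a leaf of $i'$, where the two differences are $-d_C(i,i')$ and $+d_C(i,i')$; and when $u = i$ and $v = v_{i'j}$ with $i \neq i'$, proceed similarly. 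Each subcase is a short substitution into the distance formulas above, always keeping $d_C(i,i') \neq 0$ for distinct cycle vertices.

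The substantive point is the distance identity of the first paragraph; once it is in hand the lower bound is immediate and subsumes both preparatory lemmas, so I expect no real difficulty there. The only genuine labor — and it is routine — is the case analysis for the upper bound, where the main care is to keep the five pair-types cleanly separated and to confirm non-constancy without ever appealing to a degenerate $d_C(i,i') = 0$.
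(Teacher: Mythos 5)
Your proposal is correct, and it is worth comparing the two halves separately. For the upper bound you do essentially what the paper does: you take the same witness set $Z=\{A_{1j},A_{2j},\ldots,A_{nj}\}$ of all $nm$ leaves and verify by a case analysis on the pair type that $Z$ is doubly resolving; your five pair-types refine the paper's three cases (both on the cycle, both leaves, mixed), and your explicit distance formulas make the verification more checkable than the paper's terse inequalities, but the idea is identical. The lower bound is where you genuinely diverge, to your advantage. The paper's lower bound rests only on Lemma \ref{f.4-1} and Lemma \ref{f.4-2}, which show that \emph{two particular} subsets (of sizes $nm-n$ and $nm-1$) fail to be doubly resolving; strictly speaking this does not rule out some \emph{other} set of size $nm-1$ succeeding, so the paper's argument has a logical gap. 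Your identity
\[
d(v_{ij},w)-d(i,w)=1 \quad \text{for all } w\neq v_{ij}, \qquad d(v_{ij},v_{ij})-d(i,v_{ij})=-1,
\]
shows the difference function for the pair $(v_{ij},i)$ is constant off the single vertex $v_{ij}$, hence that pair can be doubly resolved only if $v_{ij}$ itself belongs to the set; applied to all $nm$ leaves this forces every doubly resolving set to contain every leaf, giving the clean bound $\psi(JFG(n,m))\geq nm$. This single observation subsumes both of the paper's preparatory lemmas (they are exactly the cases where a leaf $v_{im}$ was omitted) and repairs the gap, so your route is not merely different but strictly more rigorous than the published one.
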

\begin{proof}
Let $V(JFG(n, m))=V_1 \cup V_2$, where $V_1=V(C_n)=\{1, 2, ..., n\}$,   $V_2=\{A_{1j}, A_{2j}, ..., A_{nj}\}$, and let  $A_{ij}=\cup_{j=1} ^ m v_{ij}$,
$1\leq i \leq n$. Suppose that every vertex $i\in V_1$  is adjacent to vertices $v_{i1}, v_{i2}, ..., v_{im}\in A_{ij}$.
We know that the ordered subset $Z=\{A_{1j}, A_{2j}, ..., A_{nj}\}$ of vertices in the  jellyfish graph $JFG(n, m)$ is a resolving set of jellyfish
graph $JFG(n, m)$ of size $nm$. Also by Theorem \ref{f.1},  the metric dimension of   jellyfish graph $JFG(n, m)$ is $\beta(JFG(n, m))=nm-n$. Moreover,
$B(JFG(n, m))\leq \psi(JFG(n, m))$. We show  that the  subset $Z=\{A_{1j}, A_{2j}, ..., A_{nj}\}$ of vertices in jellyfish graph $JFG(n, m)$ is  a
doubly resolving set of  jellyfish graph $JFG(n, m)$. It is sufficient to show that for two vertices $u$ and $v$ of   jellyfish graph $JFG(n, m)$
there are vertices $x, y \in Z$ such that $d(u, x) - d(u, y) \neq d(v, x) - d(v, y)$.
Consider two vertices $u$ and $v$ of jellyfish graph $JFG(n, m)$. Then we have the following: \newline

Case 1. Let  $u\notin Z$ and $v\notin Z$. Hence, $u,v\in V_1=V(C_n)=\{1, 2, ..., n\}$. We can assume without loss of generality that
$u=i$ and $v=j$, $1\leq i, j\leq n$ and $i\neq j$. Therefore, if  $x=v_{i1}$ and $y=v_{j1}$, then we have $d(u, x) - d(u, y) \neq d(v, x) - d(v, y)$, because  $d(u, x) - d(u, y)<0$ and $d(v, x) - d(v, y)>0$. \newline

Case 2. Let  $u\in Z$ and $v\in Z$. Hence, $u,v\in V_2=\{A_{1j}, A_{2j}, ..., A_{nj}\}$. Therefore, if  $x=u$ and $y=v$, then we have
$d(u, x) - d(u, y) \neq d(v, x) - d(v, y)$, because  $d(u, x) - d(u, y)<0$ and $d(v, x) - d(v, y)>0$. \newline

Case 3. Finally, let  $u\notin Z$ and $v\in Z$. Hence, $u\in V_1=V(C_n)=\{1, 2, ..., n\} $ and $v\in V_2=\{A_{1j}, A_{2j}, ..., A_{nj}\}$.
We can assume without loss of generality that $u=k$, $1\leq k\leq n$  and $v=v_{11}\in A_{11}$. Therefore, if  $x=v_{k2}$ and $y=v_{11}$,
then we have $d(u, x) - d(u, y) \neq d(v, x) - d(v, y)$, because  $d(u, x) - d(u, y)\leq0$ and $d(v, x) - d(v, y)>0$. \newline

Thus, by Lemma \ref{f.4-1}, Lemma \ref{f.4-2}, and  the above cases we  conclude that the  cardinality of minimum doubly resolving set of
jellyfish graph $JFG(n, m)$ is $nm$.
\end{proof}
\begin{lemma}\label{f.5}
Let $n, m$ be integers such that $n\geq 3$, $m\geq 2$. Then the subset $N=\{A_{1j}-v_{1m}, A_{2j}-v_{2m}, ..., A_{nj}-v_{nm}\}$ of vertices in the
jellyfish graph $JFG(n, m)$ is  not a strong resolving set of  jellyfish graph $JFG(n, m)$.
\end{lemma}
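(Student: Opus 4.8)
The plan is to exhibit a single pair of distinct vertices that \emph{no} vertex of $N$ strongly resolves; by the definition of a strong resolving set, producing one such pair immediately shows that $N$ fails to be one. First I would record that $N=\{A_{1j}-v_{1m},\ldots,A_{nj}-v_{nm}\}$ is precisely the set of all pendant vertices \emph{except} $v_{1m},v_{2m},\ldots,v_{nm}$, so that these $n$ leaves all lie in $V(JFG(n,m))-N$. Since $n\geq 3$, I may pick the two leaves $u=v_{1m}$ and $v=v_{2m}$, whose unique neighbours in $JFG(n,m)$ are the cycle vertices $1$ and $2$, respectively.

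The key observation, which does most of the work, is that a pendant (degree-one) vertex can occur on a path only as one of its two endpoints: were it an interior vertex, it would need two distinct neighbours along the path, contradicting that its degree is $1$. Consequently, for any $w\in N$, the leaf $u=v_{1m}$ lies on a shortest $v-w$ path only if $u=v$ or $u=w$; but $u\neq v$, and $u\notin N$ while $w\in N$, so also $u\neq w$. Hence $u$ lies on no shortest $v-w$ path. By the symmetric argument with the roles of $u$ and $v$ interchanged, $v$ lies on no shortest $u-w$ path either.

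Putting these together, for every $w\in N$ neither of the two conditions in the definition of ``$w$ strongly resolves $u,v$'' is satisfied, so no vertex of $N$ strongly resolves the pair $v_{1m},v_{2m}$. Therefore $N$ is not a strong resolving set of $JFG(n,m)$, which is exactly the claim. I do not expect a genuine obstacle here: the only things to get right are the choice of pair (two of the \emph{excluded} leaves, so that both lie outside $N$) and the elementary fact that a leaf is never an interior vertex of a path. An equivalent route would invoke the characterisation of strong resolving sets as vertex covers of the strong resolving graph, noting that any two distinct leaves are mutually maximally distant, so $\{v_{1m},v_{2m}\}$ would be an edge of that graph left uncovered by $N$; but the direct argument above is shorter and self-contained, and it is the one I would write out.
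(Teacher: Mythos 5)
Your proof is correct and takes essentially the same approach as the paper: both arguments point at the excluded leaves $v_{1m},v_{2m},\dots,v_{nm}$ lying outside $N$ and observe that no vertex $w\in N$ can strongly resolve a pair of them, your degree-one-vertex-is-never-interior observation simply making rigorous what the paper dismisses with ``it is not hard to see.'' The only difference is that the paper's proof goes on to extract the bound $|N|\geq nm-1$ for any strong resolving set (needed later in Theorem~\ref{f.6}), a consequence your pair argument also yields but which you did not state; it is, however, not part of the lemma as stated.
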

\begin{proof}
Let $M=V_2-N=\{v_{1m}, v_{2m}, ..., v_{nm}\}$, where $V_2$ is the set which is defined already. It is not hard to see that for every two
distinct vertices $u, v\in M$ there is not a vertex $w\in N$ such that $u$ belongs to a shortest $v - w$ path or $v$ belongs to a shortest
$u - w$ path. So, the subset $N=\{A_{1j}-v_{1m}, A_{2j}-v_{2m}, ..., A_{nj}-v_{nm}\}$ of vertices in jellyfish graph $JFG(n, m)$ is not
a strong resolving set of jellyfish graph $JFG(n, m)$. We conclude that if   $N$ is a strong resolving set of  jellyfish graph $JFG(n, m)$
then $|N|\geq nm-1$, because $|M|$ must be less than $2$.
\end{proof}
\begin{theorem}\label{f.6}
Let $n, m$ be integers such that $n\geq 3$, $m\geq 2$. Then the strong metric dimension of  jellyfish graph $JFG(n, m)$ is $nm-1$.
\begin{proof}
By Lemma \ref{f.5}, we know that if $N$ is a strong resolving set of the jellyfish graph $JFG(n, m)$ then $|N|\geq nm-1$.
We show that the subset $N=\{A_{1j}, A_{2j}, ..., A_{nj}-v_{nm}\}$ of vertices in   jellyfish graph $JFG(n, m)$
is a strong resolving set of  jellyfish graph $JFG(n, m)$. It is sufficient to prove that  every two distinct vertices
$u,v \in V(JFG(n, m))-N=\{1,2, ...,n, v_{nm}\}$ is strongly resolved by a vertex $w\in N$. In the following cases we
show that the  strong metric dimension of  jellyfish graph $JFG(n, m)$ is $nm-1$.\newline

Case 1. Let $u$ and $v$ be two distinct vertices in $V(JFG(n, m))-N$ such that  $u, v\in V_1=V(C_n)=\{1, 2, ..., n\}$. So, there is  $i, j\in V_1$
such that $u=i$ and $v=j$. Therefore $i$ and $j$ will be strongly resolved by some $v_{ij}\in A_{ij}$, because $i$ and $v_{ij}$ are adjacent, and
hence $i$ belongs to a shortest $v_{ij} - j$ path. \newline

Case 2. Now, let $u$ and $v$ be two distinct vertices in $V(JFG(n, m))-N$ such that $u\in V_1=\{1, 2, ..., n\}$ and $v=v_{nm}$.
Without loss of generality we may assume $u=i$, where $i\in V_1$. Therefore $i$ and $v_{nm}$ will be strongly resolved by some $v_{ij}\in A_{ij}$,
because $i$ and $v_{ij}$ are adjacent, and hence $i$ belongs to a shortest $v_{ij} -v_{nm}$ path. \newline

From the above cases, we  conclude that the minimum cardinality of a strong metric dimension of the jellyfish graph $JFG(n, m)$ is $nm-1$.
\end{proof}
\end{theorem}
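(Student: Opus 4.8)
The plan is to sandwich $sdim(JFG(n,m))$ between $nm-1$ from below and $nm-1$ from above, using the already-established Lemma \ref{f.5} for the lower bound and an explicit construction for the upper bound. The structural feature driving everything is that each $v_{ij}$ is a leaf (a degree-one vertex), so it can occur on a shortest path only as an endpoint. Consequently no vertex $w$ can strongly resolve a pair of distinct leaves attached to different cycle vertices unless $w$ is one of the two leaves, and this is exactly what forces almost all leaves into any strong resolving set.

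For the lower bound there is nothing left to do: Lemma \ref{f.5} already shows that the complement of a strong resolving set can contain at most one leaf, so every strong resolving set $N$ satisfies $|N|\ge nm-1$. It therefore remains only to produce one strong resolving set of size exactly $nm-1$.

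For the upper bound I would take $N=\{A_{1j}, A_{2j}, \dots, A_{nj}\}-\{v_{nm}\}$, i.e. all $nm$ leaves except the single leaf $v_{nm}$, so that $|N|=nm-1$ and $V(JFG(n,m))-N=\{1,2,\dots,n,v_{nm}\}$. The key simplification is that any pair $\{u,v\}$ with $u\in N$ is automatically strongly resolved by $w=u$, since $u$ is an endpoint of (hence lies on) every shortest $v$-$u$ path; thus it suffices to strongly resolve the pairs whose \emph{both} members lie in $V(JFG(n,m))-N$. These split into two cases: (i) $u=i$ and $v=j$ are distinct cycle vertices, and (ii) $u=i$ is a cycle vertex while $v=v_{nm}$. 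In each case I would resolve the pair using a leaf $v_{i\ell}$ of the cycle vertex $i$ that belongs to $N$ (such a leaf exists because $m\ge 2$, so $i$ has at least one leaf different from $v_{nm}$, for instance $v_{i1}$). Since $i$ is the unique neighbour through which any path must leave $v_{i\ell}$, the vertex $i$ lies on a shortest $v_{i\ell}$-$j$ path in case (i) and on a shortest $v_{i\ell}$-$v_{nm}$ path in case (ii); hence $w=v_{i\ell}$ strongly resolves the pair. This shows $N$ is a strong resolving set, giving $sdim(JFG(n,m))\le nm-1$, which together with the lower bound yields equality.

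The main obstacle is bookkeeping rather than depth. One must (a) justify cleanly the reduction to pairs contained in $V(JFG(n,m))-N$, and (b) confirm in every case that the auxiliary leaf chosen as the resolving vertex genuinely lies in $N$ — this is precisely where the hypothesis $m\ge 2$ enters, since for $m=1$ the unique leaf of vertex $n$ would be exactly the excluded vertex $v_{nm}$ and the construction would collapse. The same geometric fact, that a leaf can sit on a shortest path only as an endpoint, has to be invoked consistently, both implicitly in Lemma \ref{f.5} for the lower bound and in verifying that $i$ really lies on the relevant shortest paths in the upper bound.
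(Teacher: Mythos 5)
Your proposal is correct and follows essentially the same route as the paper: the lower bound is delegated to Lemma \ref{f.5}, and the upper bound uses the same set $N=\{A_{1j},\dots,A_{nj}\}-\{v_{nm}\}$ with the same two-case analysis, strongly resolving each pair via a leaf $v_{i\ell}$ adjacent to the cycle vertex $i$. Your write-up is in fact slightly more careful than the paper's, since you explicitly justify the reduction to pairs contained in $V(JFG(n,m))-N$ (via $w=u$ for $u\in N$) and you note that the chosen leaf must itself lie in $N$, which is where $m\geq 2$ is genuinely used --- two points the paper's proof passes over silently.
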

\begin{lemma}\label{f.7}
Let $n, m$ be integers such that $n\geq 3$, $m\geq 2$. Then an ordered subset $W=\{A_{1j}-v_{1m}, A_{2j}-v_{2m}, ..., A_{nj}-v_{nm}\}$ of vertices in the
jellyfish graph $JFG(n, m)$ is  not the adjacency resolving set of  jellyfish graph $JFG(n, m)$.
\end{lemma}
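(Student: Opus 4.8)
The plan is to exhibit two distinct vertices of $JFG(n,m)$ that receive identical adjacency representations with respect to $W$, which by Definition \ref{b.7} suffices to show $W$ is not an adjacency resolving set. First I would unpack the notation: the set $W=\{A_{1j}-v_{1m}, A_{2j}-v_{2m}, \ldots, A_{nj}-v_{nm}\}$ is precisely the collection of all pendant vertices $v_{ij}$ with $1\leq i\leq n$ and $1\leq j\leq m-1$. Equivalently, $W$ contains every leaf of $JFG(n,m)$ except the $n$ leaves $v_{1m}, v_{2m}, \ldots, v_{nm}$ (one per cycle vertex), and $W$ contains none of the cycle vertices $1,2,\ldots,n$. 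Consequently $V(JFG(n,m))-W=\{1,2,\ldots,n\}\cup\{v_{1m}, v_{2m}, \ldots, v_{nm}\}$, and these are exactly the vertices whose adjacency representations would have to be pairwise distinct for $W$ to be adjacency resolving.

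The key observation is that every element of $W$ is a leaf, and in $JFG(n,m)$ each leaf has degree one, adjacent only to its own cycle vertex; in particular no two leaves are adjacent. I would use this to compute $\hat r(v_{im}\mid W)$ for each removed leaf $v_{im}$. Since $v_{im}\notin W$ and $v_{im}\nsim w$ for every leaf $w\in W$, Definition \ref{b.7} gives $a_{JFG(n,m)}(v_{im}, w)=2$ for all $w\in W$. Hence $\hat r(v_{im}\mid W)=(2,2,\ldots,2)$, the constant all-twos $|W|$-vector, and crucially this value does not depend on $i$.

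Because $n\geq 3$, the vertices $v_{1m}$ and $v_{2m}$ are distinct yet satisfy $\hat r(v_{1m}\mid W)=\hat r(v_{2m}\mid W)=(2,\ldots,2)$. Therefore the adjacency representations relative to $W$ are not all distinct, so $W$ is not an adjacency resolving set, which is the claim. I do not expect any genuine obstacle here: the whole argument rests on the fact that the omitted leaves are mutually non-adjacent and each is attached only to a cycle vertex that has itself been excluded from $W$, so all of them collapse to a single adjacency representation. The only point worth verifying carefully is the explicit description of $W$ (that it contains no cycle vertex and none of the $v_{im}$), which follows directly from the definition of the set.
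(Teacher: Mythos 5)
Your proposal is correct and follows essentially the same route as the paper: both arguments show that every omitted leaf $v_{im}$ receives the constant all-twos adjacency representation $\hat{r}(v_{im}\mid W)=(2,2,\ldots,2)$ because each element of $W$ is a pendant vertex non-adjacent to $v_{im}$, so the $n\geq 3$ removed leaves collapse to a single representation. Your write-up is in fact slightly more careful than the paper's (you justify explicitly why no two leaves are adjacent); the paper additionally extracts the stronger consequence $|W|\geq nm-1$ for any adjacency resolving set, which it needs later for Theorem \ref{f.8}, but that goes beyond the lemma as stated.
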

\begin{proof}
Let $\Gamma= JFG(n,m)$ and  $M=V_2-W=\{v_{1m}, v_{2m}, ..., v_{nm}\}$, where $V_2$ is the set which is defined already. Thus, the adjacency  representation of  the  vertices  $v_{1m}, v_{2m}, ..., v_{nm}\in V(JFG(n, m))-W$ with respect to $W$ is the $nm-n$-vector
$\hat{r}(v_{1m}|W) =\hat{r}(v_{2m}|W)=...=\hat{r}(v_{nm}|W)=  (2, 2, ..., 2)$. Because, for every vertex
$w \in W$ we have $a_\Gamma(w, v_{1m}) = a_\Gamma(w, v_{2m}) = ...=a_\Gamma(w, v_{nm})=2$. We conclude that if $W$ is an  adjacency resolving set of  jellyfish graph $JFG(n, m)$ then $|W|\geq nm-1$, because $|M|$ must be less than $2$.
\end{proof}
\begin{theorem}\label{f.8}
Let $n, m$ be integers such that $n\geq 3$, $m\geq 2$. Then the adjacency dimension of jellyfish graph $JFG(n, m)$ is $nm-1$.
\end{theorem}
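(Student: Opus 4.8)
The plan is to prove the two inequalities $\hat{\beta}(JFG(n,m)) \geq nm-1$ and $\hat{\beta}(JFG(n,m)) \leq nm-1$ separately. The lower bound is already supplied by Lemma \ref{f.7}: it shows that any leaves left out of an adjacency resolving set $W$ (together with the rest of their twin classes) receive the common adjacency vector $(2,\dots,2)$, so the set $M=V_2\setminus W$ of omitted leaves must satisfy $|M|<2$. Hence any adjacency resolving set omits at most one leaf, forcing $|W|\geq nm-1$. It therefore remains only to exhibit an adjacency resolving set of cardinality exactly $nm-1$.

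For the upper bound I would take $W=\{A_{1j},A_{2j},\dots,A_{nj}-v_{nm}\}=V_2\setminus\{v_{nm}\}$, the set of all $nm-1$ leaves except $v_{nm}$, so that $V(JFG(n,m))\setminus W=\{1,2,\dots,n,v_{nm}\}$. As already observed in the introduction for ordinary resolving sets, the vertices of $W$ need no checking: each $w\in W$ is the unique vertex carrying a $0$ in its own coordinate, whereas every vertex outside $W$ has adjacency vector in $\{1,2\}^{|W|}$ with no zero entry. Thus it suffices to verify that the $n+1$ vertices of $V(JFG(n,m))\setminus W$ have pairwise distinct adjacency representations.

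The remaining computation is short. For a cycle vertex $k$, the only leaves adjacent to $k$ are $v_{k1},\dots,v_{km}$, so $\hat{r}(k|W)$ has entry $1$ exactly in the coordinates indexed by the $W$-leaves attached to $k$ and entry $2$ everywhere else. Since the leaf classes attached to distinct cycle vertices are disjoint, the positions of the $1$'s differ from one cycle vertex to the next; hence $\hat{r}(1|W),\dots,\hat{r}(n|W)$ are pairwise distinct (here $k=n$ contributes $m-1$ ones and each $k\neq n$ contributes $m$ ones, all in disjoint coordinate blocks). For the remaining vertex $v_{nm}$, its unique neighbour is the cycle vertex $n$, which is not in $W$, so $v_{nm}$ is non-adjacent to every element of $W$ and $\hat{r}(v_{nm}|W)=(2,\dots,2)$. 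Since each cycle vertex has at least $m-1\geq 1$ entries equal to $1$, the all-twos vector of $v_{nm}$ differs from every $\hat{r}(k|W)$. Therefore $W$ is an adjacency resolving set of size $nm-1$, giving $\hat{\beta}(JFG(n,m))\leq nm-1$, and combined with the lower bound this yields $\hat{\beta}(JFG(n,m))=nm-1$.

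I expect the only genuinely delicate point to be the lower bound rather than this construction. Lemma \ref{f.7} disposes of the situation in which the omitted vertices are leaves from distinct twin classes while $W$ stays inside $V_2$; a fully self-contained argument would also need to rule out trading omitted leaves against cycle vertices placed into $W$. The key quantitative fact is that each of the $n$ twin classes $A_{ij}$ contributes at most one vertex to $V_2\setminus W$, and that any two omitted leaves from different classes force a cycle vertex into $W$ to separate them, so the total count can never drop below $nm-1$. Since I am free to invoke Lemma \ref{f.7}, in the write-up I would simply cite its conclusion for the lower bound and concentrate the argument on the explicit resolving set $W=V_2\setminus\{v_{nm}\}$ described above.
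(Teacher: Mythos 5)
Your proposal follows essentially the same route as the paper's proof: the lower bound $\hat{\beta}(JFG(n,m))\geq nm-1$ is cited from Lemma \ref{f.7}, and the upper bound is established with the identical witness $W=\{A_{1j},A_{2j},\dots,A_{nj}-v_{nm}\}$ via the same adjacency computations (each cycle vertex $k$ has entry $1$ exactly at its own leaves in $W$, while $v_{nm}$ gets the all-twos vector). Your closing observation --- that Lemma \ref{f.7} as stated only treats subsets of $V_2$, so a self-contained lower bound would also have to exclude trading omitted leaves for cycle vertices in $W$ --- is a fair point about a gap the paper's own proof glosses over as well, but since you resolve it by invoking the lemma exactly as the paper does, the two arguments coincide.
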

\begin{proof}
By Lemma \ref{f.7}, we know that if $W$ is an adjacency resolving set of the jellyfish graph $JFG(n, m)$
then $|W|\geq nm-1$. Now, let $\Gamma= JFG(n,m)$ and $W$  be an ordered subset of $V_2$ in jellyfish graph $JFG(n, m)$ such that $|W|= nm-1$.
We show that $W$ is  an adjacency resolving set of jellyfish graph $JFG(n, m)$.
We may assume without loss of generality that  an ordered subset is $W=\{A_{1j}, A_{2j}, ..., A_{nj}-v_{nm}\}$. Hence
$V(JFG(n, m))- W=\{1, 2, ..., n, v_{nm}\}$. We can show that all the vertices $1, 2, ..., n, v_{nm}\in V(JFG(n, m))-W$ have different
adjacency representations with respect to $W$. Because, for every $k\in V(JFG(n, m))- W$, $1\leq k\leq n$ and
$ v_{ij}\in A_{ij}$, $1\leq i \leq n$, $1\leq j \leq m$, if $k=i$ then we have $a_\Gamma(k, v_{ij})=1$, otherwise $a_\Gamma(k, v_{ij})=2$.
Also, for the vertex $v_{nm} \in V(JFG(n, m))- W$ with $v_{nm}\neq v_{ij}\in A_{ij}$, $1\leq i \leq n$, $1\leq j \leq m$,  we have
$a_\Gamma(v_{nm}, v_{ij})=2$. Therefore, all the vertices $1, 2, ..., n, v_{nm}\in V(JFG(n, m))-W$ have different  adjacency representations with
respect to $W$. This implies that $W$ is an adjacency resolving set of the  jellyfish graph $JFG(n, m)$. We know conclude that the minimum cardinality of the adjacency resolving set  of jellyfish graph $JFG(n, m)$ is $nm-1$.
\end{proof}
\noindent \textbf{Metric dimension, minimal doubly resolving sets and the strong metric dimension  of cocktail party graph $CP(k+1)$}\\

Let $\Gamma=Cay(\mathbb{Z}_n, S_k)$ be the Cayley graph on the cyclic additive group $\mathbb{Z}_n$, where
$S_1=\{1, n-1\}$, ..., $S_k=S_ {k-1}\cup\{k, n-k\}$ are the inverse closed subsets of $\mathbb{Z}_n-\{0\}$ for any $k\in \mathbb{N}$,
$1\leq k\leq [\frac{n}{2}]-1$. If $n$ is an even integer and $k=\frac{n}{2}-1$, then we can show that $Cay(\mathbb{Z}_n, S_k)$ is obtained from
the complete graph $K_{2(k+1)}$ by deleting a perfect matching, and hence $Cay(\mathbb{Z}_n, S_k)$ is isomorphic to the
cocktail party graph $CP(k+1)$. Moreover, we can show that $Cay(\mathbb{D}_{2n}, \Omega)$, where
$$\mathbb{D}_{2n}=<a,b \,\  | \,\, a^n=b^2=1  , ba=a^{n-1}b>,$$ is the dihedral group of order $2n$ and
$\Omega=\{a, a^2, ..., a^{n-1}, ab, a^{2}b, ... , a^{n-1}b\}$ is the  inverse closed subset of $\mathbb{D}_{2n}-\{1\}$, is isomorphic to the
cocktail party graph $CP(n)$. In this section, we consider the problem of determining the cardinality $\psi(\Gamma)$  of minimal
doubly resolving sets of $\Gamma$. First, we show that if  $n$ is an even integer and $k=\frac{n}{2}-1$,  then the metric dimension of
$\Gamma$ is $k+1$. Also, we prove that if  $n$ is an even integer and $k=\frac{n}{2}-1$, then every minimal resolving set of $\Gamma$ is also
a doubly resolving set, and, consequently, $\psi(\Gamma)$ is equal to the metric dimension of $\beta(\Gamma)$, which is known from the
literature. Moreover, we find an explicit expression for the strong metric dimension of $\Gamma$.
\begin{theorem}\label{c.1}
Let $\Gamma=Cay(\mathbb{Z}_n, S_k)$ be the Cayley graph on the cyclic group $\mathbb{Z}_n$ $(n\geq 8),$ where
$S_1=\{1, n-1\}$, ..., $S_k=S_ {k-1}\cup\{k, n-k\}$ are the inverse closed subsets of $\mathbb{Z}_n-\{0\}$ for any
$k\in \mathbb{N}$, $1\leq k\leq [\frac{n}{2}]-1$. If $n$ is an even integer and $k=\frac{n}{2}-1$,  then the metric dimension of $\Gamma$ is $k+1$.
\end{theorem}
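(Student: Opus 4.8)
The plan is to exploit the extremely simple distance structure of $\Gamma$ under the hypotheses $n$ even and $k=\frac{n}{2}-1$. The first step is to make explicit the isomorphism already noted in the text. With $k=\frac{n}{2}-1$ one has $S_k=\mathbb{Z}_n\setminus\{0,\frac{n}{2}\}$, so in $\Gamma$ a vertex $i$ is adjacent to every other vertex except its antipode $i+\frac{n}{2}\pmod n$. Thus $\Gamma$ is $K_n$ with the perfect matching $M=\{\{i,\,i+\frac{n}{2}\}:0\leq i\leq \frac{n}{2}-1\}$ deleted, i.e. the cocktail party graph $CP(k+1)$ on $n=2(k+1)$ vertices. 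Consequently $d(i,j)=1$ for every adjacent pair and $d(i,\,i+\frac{n}{2})=2$ for each of the $k+1$ matched pairs, so $diam(\Gamma)=2$; in particular Lemma \ref{b.8}(1) would let me pass freely between the metric and the adjacency representation if needed.

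For the lower bound I would use the twin structure of the matched pairs. For each pair $\{i,\,i+\frac{n}{2}\}$ the two endpoints are nonadjacent and share the same open neighborhood $N(i)=N(i+\frac{n}{2})=V(\Gamma)\setminus\{i,\,i+\frac{n}{2}\}$. Hence for every vertex $w\notin\{i,\,i+\frac{n}{2}\}$ we have $d(i,w)=d(i+\frac{n}{2},w)$, so no such $w$ can resolve this pair. Therefore any resolving set $W$ must contain at least one endpoint of each of the $k+1$ matched pairs, and since these pairs are pairwise disjoint this forces $|W|\geq k+1$, giving $\beta(\Gamma)\geq k+1$.

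For the upper bound I would exhibit a resolving set of size exactly $k+1$, namely $W=\{0,1,\ldots,k\}$, which picks one endpoint from each matched pair. Since only the vertices of $V(\Gamma)\setminus W=\{\frac{n}{2},\frac{n}{2}+1,\ldots,n-1\}$ need to be tested, it suffices to separate these. For $v=\frac{n}{2}+t$ with $0\leq t\leq k$, the antipode of $v$ is $t\in W$, so $d(v,t)=2$ while $d(v,w)=1$ for every other $w\in W$; thus $r(v\mid W)$ has the single entry $2$ in coordinate $t$ and $1$ elsewhere. These vectors are pairwise distinct as $t$ ranges over $\{0,\ldots,k\}$, so $W$ resolves $\Gamma$ and $\beta(\Gamma)\leq k+1$. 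Combining the two bounds yields $\beta(\Gamma)=k+1$.

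I do not expect a genuinely hard step in this argument; a graph of diameter $2$ leaves very little room. The only point that requires care is the lower bound, where I must rule out a resolving set that ``skips'' an entire matched pair — this is precisely what the false-twin observation forbids, and it is the crux that prevents the weaker bound $|W|\geq k$ from being achievable. Everything else reduces to direct distance computations.
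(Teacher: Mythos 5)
Your proof is correct, and its lower bound takes a genuinely different (and tighter) route than the paper's. The upper bound coincides in substance: your $W=\{0,1,\ldots,k\}$ is exactly one endpoint from each antipodal pair, which is the same set (up to the shift by $1$ and relabeling) as the paper's clique $W=\{1,2,\ldots,k+1\}$ in its Case~3, and both arguments verify resolvability by noting that each outside vertex has the single entry $2$ in the coordinate of its antipode and $1$ elsewhere. The difference is in how $\beta(\Gamma)\geq k+1$ is established. The paper argues by cases: it checks that a clique of size $k$ fails (Case~1), that a clique of size $k$ augmented by one vertex at distance $2$ fails (Case~2), and then appeals to ``without loss of generality'' and to vertex transitivity. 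This is incomplete as written, since $Aut(\Gamma)$ acts transitively on vertices but not on arbitrary $k$-subsets, so verifying $W=\{1,\ldots,k\}$ does not dispose of every candidate set of size $k$ --- for instance a $k$-set containing both endpoints of some antipodal pair is not the image of a clique under any automorphism, and Cases~1--2 never address it. Your false-twin argument closes exactly this gap: since $N(i)=N(i+\frac{n}{2})$ for each of the $k+1$ disjoint antipodal pairs, no vertex outside a pair can resolve it, so every resolving set must meet every pair, giving $|W|\geq k+1$ uniformly over all subsets. What each approach buys: the paper's computation stays close to the Cayley-graph structure (leaning on Proposition~3.2 of the cited reference for the unique vertex at distance $2$), while your argument is more elementary, is a complete proof of the lower bound, and generalizes immediately to any graph with nonadjacent twin classes (e.g.\ complete multipartite graphs), with the diameter-$2$ observation and Lemma~\ref{b.8}(1) available but not even needed.
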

\begin{proof}
Let $V(\Gamma)=\{1, ..., n\}$ be the vertex set of $\Gamma$.
By  proof of Proposition  3.4 in ~\cite{k-1}, we know that the diameter of $\Gamma$ is 2. Hence,
for all the vertices $x, y \in V (\Gamma)$, the length of a shortest path from $x$ to $y$ is   $d(x, y)=1$ or $2$. Also, we know that
the size of  largest clique in the graph $\Gamma$ is  $k+1$. Now, let $W$  be an ordered subset of vertices  in the graph $\Gamma$ such that
$W$  is a clique in $\Gamma$. Indeed, for every $x, y \in W$, we have $d_\Gamma(x, y)=1$. In the following cases, we show that the metric
dimension of $\Gamma$ is $k+1$.\newline

Case 1. It is easy to see that if  $|W|\leq k$, then $W$ is  not a resolving set of $\Gamma$. In particular, if $|W|= k$, then we show that $W$ is not
a resolving set of $\Gamma$. We may assume without loss of generality that an ordered subset of vertices in the graph $\Gamma$ is
$W=\{1, 2, 3, ..., k\}$. Hence, $V(\Gamma)- W=\{k+1, k+2, ..., n\}$. On the other hand, by  proof of Proposition  3.2 in ~\cite{k-1}, we know
that  for every vertex  $x$ in $\Gamma$, there is  exactly one $y$ in $\Gamma$ such that $x^{-1}y=k+1$, that is $d(x, y)=2$. Hence, there are
vertices $k+2, k+3, ..., n-1$ in $V(\Gamma)- W$ such that $d(1, k+2)=2$, $d(2, k+3)=2$, ..., $d(k, n-1)=2$. Therefore, the metric representations
of  the vertices $k+2, k+3, ..., n-1\in V(\Gamma)-W$ with respect to $W$ are the $k$-vectors
$r(k+2  |  W)=(2, 1, 1, ..., 1)$, $r(k+3  |  W)=(1, 2, 1, ..., 1)$, ..., $r(n-1  |  W)=(1, 1, 1, ..., 2)$.
Moreover,  the metric representations of the vertices $k+1, n\in V(\Gamma)-W$ with respect to $W$ is the $k$-vector
$r(k+1  |  W)=r(n  |  W)=(1, 1, 1, ..., 1)$. Thus, $W$ is not a resolving set of $\Gamma$. Because, the metric representation of the vertices
 $k+1, n$ is the same as $k$-vector.\newline

Case 2. Let $W$  be a clique in the graph $\Gamma$ such that $x\in W$, and $|W|= k$. We know that there is exactly one $y$ in $V(\Gamma)- W$ such that $x^{-1}y=k+1$, that is $d(x, y)=2$.
In the following, we show that  an ordered subset $(W\cup y)$ of vertices in the graph $\Gamma$ is not a resolving set of $\Gamma$. In this case,  we may assume without loss of generality that an ordered subset of vertices in the graph $\Gamma$ is  $W=\{1, 2, ..., k\}$,  and let $x=1$, $y=k+2$. Therefore,
the metric representations of  the vertices $k+1, n\in V(\Gamma)-(W\cup y)$ with respect to $(W\cup y)$ is the $k+1$-vector
$r(k+1  |  W)=r(n  |  W)=(1, 1, 1, ...,1)$. Thus,
$(W\cup y)$ is not a resolving set of $\Gamma$.\newline

Case 3. Now, let $W$  be a clique in the graph $\Gamma$ such that $|W|= k+1$. We show that $W$ is
a resolving set of $\Gamma$.  We may assume without loss of generality that an ordered subset of vertices in the graph $\Gamma$ is
$W=\{1, 2, 3, ..., k, k+1\}$.  Hence $V(\Gamma)- W=\{k+2, k+3, ..., n\}$. Therefore, the metric representations of the vertices
$k+2, k+3, ..., n\in V(\Gamma)-W$ with respect to $W$ are the $(k+1)$-vectors
$r(k+2  |  W)=(2, 1, 1, ..., 1)$, $r(k+3  |  W)=(1, 2, 1, ..., 1)$, ..., $r(n  |  W)=(1, 1, 1, ..., 2)$. Thus,
all the vertices of $V(\Gamma)-W$ have different representations with respect to $W$. This implies that $W$ is
a resolving set of $\Gamma$. Moreover, the metric dimension of $\beta(\Gamma)\leq k+1$, because $\Gamma$  is a vertex transitive graph.\newline

Case 4. In particular, let $W$  be a clique in the graph $\Gamma$ such that $|W|= k+1$. We show that  for each  $x\in V(\Gamma)- W$, an ordered
subset $(W\cup x)$ of vertices in the graph $\Gamma$   is  also a resolving set of $\Gamma$. We may assume without loss of generality that an
ordered subset of vertices in the graph $\Gamma$ is $W=\{1, 2, 3, ..., k, k+1\}$, and let $x=k+2$. So,  $(W\cup x)=\{1, 2, 3, ..., k+1, k+2\}$.
Hence $V(\Gamma)- (W\cup x)=\{k+3, k+4, ..., n\}$. Therefore, the metric representations of the vertices
$k+3, k+4, ..., n\in V(\Gamma)- (W\cup x)$, with respect to $(W\cup x)$ are the $k+2$-vectors
$r(k+3  |  W)=(1, 2, 1, ..., 1, 1)$, $r(k+4  |  W)=(1, 1, 2, ..., 1, 1)$, ..., $r(n  |  W)=(1, 1, 1, ..., 2, 1)$. This implies that  $(W\cup x)$
is also a resolving set of $\Gamma$.\newline

From the above cases, we  conclude that the minimum cardinality of a resolving set of $\Gamma$ is $k+1$.
Moreover, it is well known that every Cayley graph is vertex transitive. Hence, the cardinality of every minimal resolving
set in  $\Gamma$ is $k+1$.
\end{proof}
\begin{theorem}\label{c.2}
Let $\Gamma=Cay(\mathbb{Z}_n, S_k)$ be the Cayley graph on the cyclic group $\mathbb{Z}_n$ $(n\geq 8),$ where
$S_1=\{1, n-1\}$, ..., $S_k=S_ {k-1}\cup\{k, n-k\}$ are the inverse closed subsets of $\mathbb{Z}_n-\{0\}$ for any $k\in \mathbb{N}$,
$1\leq k\leq [\frac{n}{2}]-1$. If $n$ is an even integer and $k=\frac{n}{2}-1$,  then the  cardinality of minimum doubly
resolving set of $\Gamma$ is $k+1$.
\end{theorem}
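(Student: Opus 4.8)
The plan is to verify that the minimal resolving set exhibited in Theorem \ref{c.1}, namely the maximum clique $W=\{1,2,\dots,k+1\}$, is in fact a doubly resolving set. Since every doubly resolving set is a resolving set, Theorem \ref{c.1} already gives $\psi(\Gamma)\geq\beta(\Gamma)=k+1$; hence it suffices to produce a single doubly resolving set of cardinality $k+1$, and showing that this particular $W$ works will establish $\psi(\Gamma)=k+1$.

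First I would record the relevant distance structure. Because $n$ is even and $k=\frac{n}{2}-1$, the graph $\Gamma$ is the cocktail party graph $CP(k+1)$ on $n=2(k+1)$ vertices with diameter $2$: every vertex $x$ is adjacent to all others except its unique partner $\bar{x}=x+\frac{n}{2}$, and $d(x,y)\in\{0,1,2\}$ with $d(x,y)=2$ exactly when $y=\bar{x}$. The set $W=\{1,\dots,k+1\}$ is a clique, and its complement $V(\Gamma)-W=\{k+2,\dots,n\}$ consists precisely of the partners $\bar{1},\dots,\overline{k+1}$. From this I would compute all metric representations with respect to $W$: for $i\in W$ one has $r(i\mid W)=(1,\dots,1,0,1,\dots,1)$ with the single $0$ in coordinate $i$, while for a partner $\bar{i}\notin W$ one has $r(\bar{i}\mid W)=(1,\dots,1,2,1,\dots,1)$ with the single $2$ in coordinate $i$.

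Next I would invoke the standard criterion that $W$ doubly resolves $u,v$ precisely when $r(u\mid W)-r(v\mid W)$ is not a constant vector. Splitting into the three cases $\{u,v\}\subseteq W$, $\{u,v\}\subseteq V(\Gamma)-W$, and $u\in W,\ v\notin W$, and writing $e_i$ for the $i$-th coordinate vector, the differences read off from the representations above are, respectively, of the form $e_b-e_a$, $e_a-e_b$, and either $-e_a-e_b$ (when the two special coordinates differ) or $-2e_a$ (the antipodal pair $u=a,\ v=\bar{a}$). In every case the difference has at least one nonzero coordinate and, since $n\geq 8$ forces $k+1\geq 4$, at least one zero coordinate as well, so it is never constant. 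For the delicate antipodal pair I would make the doubly resolving pair explicit: taking $x=a$ and any $y\in W\setminus\{a\}$ gives $d(u,x)-d(u,y)=-1\neq 1=d(v,x)-d(v,y)$.

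The main obstacle is not conceptual but bookkeeping: one must confirm that the difference vector is genuinely non-constant in all cases, and the role of the hypothesis $n\geq 8$ (equivalently $k+1\geq 4$) is exactly to guarantee a surviving zero coordinate even when two nonzero entries appear. Having checked every pair, I would conclude that $W$ is a doubly resolving set of size $k+1$, so $\psi(\Gamma)\leq k+1$; combined with $\psi(\Gamma)\geq\beta(\Gamma)=k+1$ this yields $\psi(\Gamma)=k+1$.
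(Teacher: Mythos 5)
Your proof is correct, and in outline it is the same as the paper's: both arguments take the maximum clique $W=\{1,2,\dots,k+1\}$ from Theorem \ref{c.1}, obtain the lower bound from $\beta(\Gamma)\leq\psi(\Gamma)$, and then check by a case analysis on the pair $u,v$ that $W$ is doubly resolving. The difference lies in execution, and here your version is genuinely better. The paper argues each case by exhibiting ad hoc representative pairs $x,y\in Z$, whereas you first compute all metric representations (a single $0$ in coordinate $i$ for $i\in W$, a single $2$ in coordinate $i$ for the partner $\bar{i}\notin W$) and then apply the uniform criterion that $W$ doubly resolves $u,v$ exactly when $r(u\mid W)-r(v\mid W)$ is non-constant. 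This buys you two things. First, it makes the role of the hypothesis $n\geq 8$ transparent: the only delicate case is the mixed non-antipodal pair, with difference $-e_a-e_b$, which is non-constant precisely because $k+1\geq 3$. Second, it repairs a slip in the paper's own Case 2: for $d(v_i,v_j)=1$ with $v_i=1$, $v_j=n$, the paper's chosen pair $x=1$, $y=k+1$ does \emph{not} doubly resolve this pair, since $d(1,k+1)=1$ (the paper computes it as $0$), so that both differences equal $-1$; your criterion handles this pair correctly (the difference vector $-e_1-e_{k+1}$ has a zero coordinate since $k+1\geq 4$, and one can take $x=1$, $y=2$). Your explicit treatment of the antipodal pair via $x=a$, $y\in W\setminus\{a\}$ coincides with the paper's $d(v_i,v_j)=2$ subcase, and your Case 3 (both vertices outside $W$) matches the paper's use of the unique partner in $Z$ at distance $2$. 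In short: same strategy, but your bookkeeping is complete and correct where the paper's is not.
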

\begin{proof}
In this Theorem, let $V(\Gamma)=\{v_1, v_2, ..., v_n\}$, where $v_i=i$ for $1\leq i \leq n$.
We know that the ordered subset $Z=\{1, 2, 3, ..., k, k+1\}$ of vertices in the graph $\Gamma$ is a resolving set for $\Gamma$ of size
$k+1$. Also, by Theorem \ref{c.1},  the metric dimension of $\Gamma$ is $\beta(\Gamma)=k+1$. Moreover,  $B(\Gamma)\leq \psi(\Gamma)$.
We show  that the subset $Z=\{1, 2, 3, ..., k, k+1\}$ of vertices in the graph $\Gamma$ is a doubly resolving set of $\Gamma$.
It is sufficient to show that for two vertices $v_i$ and $v_j$ of $\Gamma$ there are vertices $x, y \in Z$ such that
$d(v_i, x) - d(v_i, y) \neq d(v_j, x) - d(v_j, y) $. Consider two vertices $v_i$ and $v_j$ of $\Gamma$. We may assume that $i <j$.
In the following cases, we show that the  cardinality of minimum doubly
resolving set of $\Gamma$ is $k+1$.\newline

Case 1. If $1 \leq i < j \leq k+1$,  then $v_i, v_j\in Z$. So $d(v_i, v_j)=1$. We can assume that
$x=v_i\in Z$, and $y=v_j\in Z$. Hence, we have $d(v_i, x) - d(v_i, y) \neq d(v_j, x) - d(v_j, y)$, because $d(v_i, x) - d(v_i, y)<0$ and
$d(v_j, x) - d(v_j, y)>0$.\newline

Case 2. Let $1 \leq i \leq k+1 < j\leq n$.  Hence, $v_i\in Z$ and $v_j\notin Z$. Moreover, we know that $d(v_i, v_j)=1$ or $2$. In the following,
let $d(v_i, v_j)=1$. We may assume  that $v_i=1$ and $v_j=n$. Hence, by taking  $x=v_i\in Z$ and $y=k+1\in Z$, we have
$0=0-0=d(v_i, x) - d(v_i, y) \neq d(v_j, x) - d(v_j, y)=1-2=-1$. Thus, the vertices $x$ and $y$ of $Z$ doubly resolve $v_i, v_j$.
Now, let  $d(v_i, v_j)=2$. We may assume  that  $v_i=1$ and $v_j=k+2$. Hence,
by taking $x=1\in Z$ and $y=2\in Z$, we have $-1=0-1=d(v_i, x) - d(v_i, y) \neq d(v_j, x) - d(v_j, y)=2-1=1$.
Thus, the vertices $x$ and $y$ of $Z$ doubly resolve $v_i, v_j$.\newline

Case 3. Finally, let $1 <k+1< i< j\leq n$.  Hence, $v_i\notin Z$ and $v_j\notin Z$. Also, we know that $d(v_i, v_j)=1$.
On the other hand, by  proof of Proposition  3.2 in ~\cite{k-1}, we know that  for every vertex  $v_i\in V(\Gamma)-Z$, there is  exactly
one $x\in Z$ such that $d(v_i, x)=2$. Also for every vertex $v_j\in V(\Gamma)-Z$, there is  exactly one $y\in Z$ such that  $d(v_j, y)=2$.
Hence, $1=2-1=d(v_i, x) - d(v_i, y) \neq d(v_j, x) - d(v_j, y)=1-2=-1$.
Thus, the vertices $x$ and $y$ of $Z$ doubly resolve $v_i, v_j$.

From the above cases, we  conclude that the minimum cardinality of a doubly resolving set of $\Gamma$ is $k+1$.
\end{proof}
\begin{theorem}\label{c.3}
Let $\Gamma=Cay(\mathbb{Z}_n, S_k)$ be the Cayley graph on the cyclic group $\mathbb{Z}_n$ $(n\geq 8),$ where
$S_1=\{1, n-1\}$, ..., $S_k=S_ {k-1}\cup\{k, n-k\}$ are the inverse closed subsets of $\mathbb{Z}_n-\{0\}$ for any $k\in \mathbb{N}$,
$1\leq k\leq [\frac{n}{2}]-1$. If  $n$ is an even integer and $k=\frac{n}{2}-1$,  then the strong metric dimension of $\Gamma$ is $k+1$.
\end{theorem}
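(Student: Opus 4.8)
The plan is to exploit the fact that $\Gamma = CP(k+1)$ has diameter $2$ together with its extremely rigid distance structure. Writing $n = 2(k+1)$ (which is what $k = \frac{n}{2}-1$ means), each vertex $i$ is adjacent in $Cay(\mathbb{Z}_n, S_k)$ to every other vertex except its antipodal partner $\bar{\imath} = i + (k+1)$ (indices mod $n$), since $S_k = \mathbb{Z}_n \setminus \{0, \tfrac{n}{2}\}$. Hence the only pairs of vertices at distance $2$ are the $k+1$ antipodal pairs $\{i, i+k+1\}$, $1 \le i \le k+1$, and every remaining pair is adjacent. I would record this adjacency description first, because every distance that follows is then simply $1$ or $2$.

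For the lower bound I would analyze exactly which vertices can strongly resolve an antipodal pair $\{u,v\}$ (so $d(u,v)=2$). If $u$ lies on a shortest $v$–$w$ path then $d(v,u)+d(u,w)=d(v,w)$, i.e. $2 + d(u,w) = d(v,w) \le 2$, forcing $d(u,w)=0$ and thus $w=u$; symmetrically $v$ on a shortest $u$–$w$ path forces $w=v$. So an antipodal pair is strongly resolved only by one of its own two vertices. Since the $k+1$ antipodal pairs are disjoint, any strong resolving set must contain a vertex from each of them, giving $sdim(\Gamma) \ge k+1$.

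For the upper bound I would take $N = \{1,2,\ldots,k+1\}$, a transversal containing exactly one vertex of each antipodal pair. An antipodal pair is strongly resolved by its representative lying in $N$. For an adjacent pair $\{u,v\}$, I would check that each of $u$, $v$, $\bar{u}$, $\bar{v}$ strongly resolves it: for instance $\bar{v}$ is adjacent to $u$ (it is neither $u$, else $u,v$ were antipodal, nor $\bar{u}$, else $u=v$), so $v \to u \to \bar{v}$ is a shortest $v$–$\bar{v}$ path placing $u$ on it. Because $N$ contains a representative of the pair $\{u,\bar{u}\}$ and of the pair $\{v,\bar{v}\}$, it contains a vertex that strongly resolves $u,v$. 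Thus $N$ is a strong resolving set of size $k+1$, and combining the bounds yields $sdim(\Gamma) = k+1$.

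The main obstacle is the adjacent-pair verification in the upper bound: one must correctly identify that precisely the four vertices $u,v,\bar{u},\bar{v}$ (among others) strongly resolve an adjacent pair $\{u,v\}$ and confirm that the transversal $N$ always meets this set. By contrast the lower bound is essentially immediate once the diameter-$2$ constraint is invoked, since it instantly pins each antipodal pair to be resolvable only from within itself.
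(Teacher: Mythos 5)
Your proposal is correct, and its upper-bound half is in substance the paper's own: both take the transversal $N=\{1,2,\ldots,k+1\}$, resolve any pair meeting $N$ trivially by its own member, and resolve an adjacent pair $\{u,v\}$ disjoint from $N$ by the antipode (the unique vertex of $N$ at distance $2$ from one of them), placing the other vertex of the pair on a shortest path of length $2=1+1$. Where you genuinely diverge is the lower bound. The paper obtains $sdim(\Gamma)\geq k+1$ by invoking the general inequality $\beta(\Gamma)\leq sdim(\Gamma)$ together with Theorem~\ref{c.1}, so its bound is only as self-contained as the metric dimension computation it leans on. You instead argue directly from $diam(\Gamma)=2$: if $w$ strongly resolved an antipodal pair $\{u,\bar{u}\}$ with $w\notin\{u,\bar{u}\}$, then $2+d(u,w)=d(\bar{u},w)\leq 2$ would force $w=u$ (or symmetrically $w=\bar{u}$), so each of the $k+1$ pairwise disjoint antipodal pairs must contribute one of its own vertices to any strong resolving set. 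This is the standard mutually-maximally-distant/vertex-cover mechanism specialized to $CP(k+1)$: in effect you compute the strong resolving graph (a perfect matching on the antipodal pairs) and its vertex cover number, which makes the lower bound independent of the metric dimension result and yields the extra structural fact that every minimum strong resolving set is exactly a transversal of the antipodal pairs. One cosmetic slip: your phrase ``precisely the four vertices $u,v,\bar{u},\bar{v}$ (among others)'' is self-contradictory as written --- in fact \emph{exactly} those four vertices strongly resolve an adjacent pair, since any other $w$ is at distance $1$ from both $u$ and $v$ and a shortest path of length $1$ has no interior vertex --- but nothing in your argument depends on this, as you only use that these four do resolve the pair and that $N$ meets $\{v,\bar{v}\}$.
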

\begin{proof}
In this Theorem, let $V(\Gamma)=\{v_1, v_2, ..., v_n\}$, where $v_i=i$ for $1\leq i \leq n$.
We know that the ordered subset $N=\{1, 2, 3, ..., k, k+1\}$ of vertices in the graph $\Gamma$ is a resolving set for $\Gamma$ of size $k+1$.
Also, by Theorem \ref{c.1}, the metric dimension of $\Gamma$ is $\beta(\Gamma)=k+1$. Moreover,  $B(\Gamma)\leq sdim(\Gamma)$. We show  that the
subset $N=\{1, 2, 3, ..., k, k+1\}$ of vertices in the graph $\Gamma$ is  a strong resolving set of $\Gamma$.
Consider two vertices $v_i$ and $v_j$ of $\Gamma$. Assume that $i <j$. It is sufficient to prove that  there exists a vertex $w\in N$  such that
$v_i$ belongs to a shortest $v_j - w$ path or $v_j$ belongs to a shortest $v_i - w$ path.
Let $1 <k+1< i< j\leq n$. Hence $v_i\notin N$ and $v_j\notin N$.  Moreover, we have $d(v_i, v_j)=1$.
On the other hand, by  proof of Proposition  3.2 in ~\cite{k-1}, we know that  for every vertex  $v_i\in V(\Gamma)-N$, there is  exactly one
$w\in N$ such that $v_i^{-1}w=k+1$, indeed $d(v_i, w)=2$. Hence, $d(v_j, w)=1$. So, $d(v_i, w)=d(v_i, v_j)+d(v_j, w)$, that is,  vertex $v_j$
belongs to a shortest $v_i - w$ path, and hence $w$ strongly resolves vertices $v_i$ and $v_j$.
Also for $v_i\in N $ or $v_j\in N $, vertex $v_i$ or vertex $v_j$ obviously strongly resolves pair $v_i, v_j$. Therefore, $N$ is a strong resolving
set. Thus, the minimum cardinality of a strong resolving set of $\Gamma$ is $k+1$.
\end{proof}
\begin{conclusion}\label{c.4}
Let $CP(n)\cong Cay(\mathbb{D}_{2n}, \Omega)$ be the Cayley graph on  dihedral group $\mathbb{D}_{2n}$, where $\Omega$ is the inverse closed subset of $\mathbb{D}_{2n}-\{1\}$  which is defined already. Then  the minimum cardinality of resolving set,  doubly resolving set  and strong resolving
set of $CP(n)$ is $n$.
\end{conclusion}
\section{Conclusion}
In this paper, we determined the minimum cardinality of resolving set, doubly resolving set  and strong resolving set  of jellyfish graph $JFG(n, m)$.
In particular, it has be shown that  the minimum cardinality of resolving set,  doubly resolving set  and strong resolving set of cocktail party graph $CP(k+1)$ is $k+1$.\newline

\bigskip
{\footnotesize
\noindent \textbf{Data Availability}\\
No data were used to support this study.\\[2mm]
\noindent \textbf{Conflicts of Interest}\\
The authors declare that there are no conflicts of interest
regarding the publication of this paper.\\[2mm]
\noindent \textbf{Acknowledgements}\\
The work was partially supported by the National Natural Science Foundation of China under the grant No. 11601006, and China Postdoctoral
Science Foundation under grant no. 2017M621579, Postdoctoral Science Foundation of Jiangsu Province under grant no. 1701081B, and Project
of Anhui Jianzhu University under grant nos. 2016QD116 and 2017dc03. \\[2mm]
\noindent \textbf{Authors' informations}\\
\noindent Jia-Bao Liu${}^a$
(\url{liujiabaoad@163.com;liujiabao@ahjzu.edu.cn})\\
Ali Zafari${}^{b}$(\textsc{Corresponding Author})
(\url{zafari.math.pu@gmail.com}; \url{zafari.math@pnu.ac.ir})\\
\noindent Hassan Zarei${}^b$
(\url{hasan.zarei@pnu.ac.ir})

\noindent ${}^{a}$School of Mathematics and Physics, Anhui Jianzhu University, Hefei 230601, P.R. China.\\
${}^{b}$Department of Mathematics, Faculty of Science,
Payame Noor University, P.O. Box 19395-4697, Tehran, Iran.
}\\

{\footnotesize

\bigskip
\end{document}